\documentclass[12pt,reqno]{amsart}
\usepackage{tikz-cd}
\usepackage{url}
\usepackage{amsfonts,amsmath,mathtools}

	\newtheorem{Theorem}{Theorem}[section]
	\newtheorem{Lemma}[Theorem]{Lemma}
	\newtheorem{Corollary}[Theorem]{Corollary}
	\newtheorem{Proposition}[Theorem]{Proposition}
	\newtheorem{Remark}[Theorem]{Remark}
	
	\newtheorem{Example}[Theorem]{Example}
	\newtheorem{Examples}[Theorem]{Examples}

	\newtheorem{Conjecture}[Theorem]{Conjecture}
	\newtheorem{Question}[Theorem]{Question}
	
	%
	%
	\textwidth=15cm \textheight=22cm \topmargin=0.5cm
	\oddsidemargin=0.5cm \evensidemargin=0.5cm \pagestyle{plain}
	%
	%
	\def\qed{\ifhmode\textqed\fi
		\ifmmode\ifinner\hfill\quad\qedsymbol\else\dispqed\fi\fi}
	\def\textqed{\unskip\nobreak\penalty50
		\hskip2em\hbox{}\nobreak\hfill\qedsymbol
		\parfillskip=0pt \finalhyphendemerits=0}
	\def\dispqed{\rlap{\qquad\qedsymbol}}
	
	%

	\def\mm{\mathfrak{m}}
	\def\m{\mathfrak{m}}
	\def\nn{\mathfrak{n}}

	\def\depth{\textup{depth}}
	\def\reg{\textup{reg}}
	\def\gr{\textup{gr}}
	
	\def\ZZ{\mathbb{Z}}
	\def\supp{\textup{supp}}
	
	\begin{document}
	
	\title{Dao numbers and\\ the asymptotic Behaviour of fullness}
	\author{Antonino Ficarra}
	
	\address{Antonino Ficarra, Department of mathematics and computer sciences, physics and earth sciences, University of Messina, Viale Ferdinando Stagno d'Alcontres 31, 98166 Messina, Italy}
	\email{antficarra@unime.it}
	
	\subjclass[2020]{Primary 13C15, 05E40, 05C70}
	\keywords{colon ideals, $\mm$-full ideals, Koszul algebras.}
	\date{}

	\begin{abstract}
		In the present paper, we study the Dao numbers $\mathfrak{d}_1(I),\mathfrak{d}_2(I)$ and $\mathfrak{d}_3(I)$ of an ideal $I$ of a Noetherian local ring $(R,\mathfrak{m},K)$ or a standard graded Noetherian $K$-algebra. They are defined as the smallest $\ell\ge0$ such that $I\mathfrak{m}^k$ is $\mathfrak{m}$-full, full, weakly $\mathfrak{m}$-full, respectively, for all $k\ge\ell$. We provide general bounds for the Dao numbers in terms of the Castelnuovo--Mumford regularity of certain modules over the Rees algebra $\mathcal{R}(\mathfrak{m})$. If $R$ is a Koszul algebra, we prove that the Dao numbers are less or equal to $\text{reg}_{\text{gr}_\mathfrak{m}(R)}\text{gr}_\mathfrak{m}(I)$, where $\text{gr}_\mathfrak{m}(I)$ is the associated graded module of $I$. Finally, for monomial ideals, we combinatorially bound the Dao numbers in terms of asymptotic linear quotients and bounding multidegrees.
	\end{abstract}
	
	\maketitle
	
	\section*{Introduction}
	
	Let $(R,\mm,K)$ be either a Noetherian local ring or a standard graded Noetherian $K$-algebra with unique graded maximal ideal $\mm$. Hereafter, we will always assume that $K$ is an infinite field and that $\depth R>0$. These assumptions will be explained in a moment. Let $I\subset R$ be an ideal, we assume that $I$ is homogeneous if $R$ is a $K$-algebra. The ideal $I$ is called
	\begin{enumerate}
		\item[(a)] \textit{$\mm$-full} if $(I\mm:x)=I$ for a generic element $x\in\mm-\mm^2$,
		\item[(b)] \textit{full} if $(I:x)=(I:\mm)$ for a generic element $x\in\mm-\mm^2$,
		\item[(c)] \textit{weakly $\mm$-full} if $(I\mm:\mm)=I$.
	\end{enumerate}

    Here, that $x\in\mm-\mm^2$ is \textit{generic} means that $x$ belongs to some non-empty open set of the Zariski topology of $R$.
    
    Let $P$ be one of the properties $\{\mm\textit{-full,\ full,\ weakly}\ \mm\textit{-full}\}$ and let $I,J\subset R$ be two ideals. In \cite[Theorem 3.1]{Dao21}, Hailong Dao proved that $J$ has the property $P$ if and only if $J+I\mm^k$ has the property $P$ for all $k\gg0$ (the result is stated in the local case, but the proof carries over in the graded case). In particular, there exists an ideal $I\subset R$ for which $I\mm^k$ is (weakly) $\mm$-full for all $k\gg0$ if and only if $\depth R>0$ \cite[Corollary 3.2]{Dao21}. Thus, under our assumptions, for all $k\gg0$, $I\mm^k$ has any of the properties $P$ considered above. Based on this fact, Dao introduced the following numerical invariants \cite[Definition 3.1]{Dao21}, which we call the \textit{Dao numbers} of $I$:
    \begin{align*}
    	\mathfrak{d}_1(I)\ &=\ \min\{\ell\ge0\ :\ I\mm^k\ \textup{is}\ \mm\textup{-full for all}\ k\ge\ell\},\\
    	\mathfrak{d}_2(I)\ &=\ \min\{\ell\ge0\ :\ I\mm^k\ \textup{is}\ \textup{full for all}\ k\ge\ell\},\\
    	\mathfrak{d}_3(I)\ &=\ \min\{\ell\ge0\ :\ I\mm^k\ \textup{is weakly}\ \mm\textup{-full for all}\ k\ge\ell\}.
    \end{align*}
    
    It is shown in \cite[Proposition 2.2]{MNQ23} that $$\mathfrak{d}_2(I)\ \le\ \mathfrak{d}_3(I)\ =\ \mathfrak{d}_1(I)$$ if $K$ is infinite and $\depth R>0$. This fact justifies these recurring assumptions we imposed in the beginning. In \cite{Dao21}, Dao raised the problem of finding good bounds for the Dao numbers. For reduction ideals of the maximal ideal in a local ring, this question has been answered by Miranda-Neto and Queiroz \cite{MNQ23}.
    
    The concept of $\mm$-fullness was first introduced by David Rees in unpublished work, and later developed by Junzo Watanabe \cite{W87,W91,W91b}. Recently, fullness of ideals was considered by many reaserchers \cite{CK21,Dao21,DaoK20,FMNQ,HW12,HW15,HRR2002,MN2017,Rush13}. Surprisingly, these concepts are also related to the classical Zariski-Lipman conjecture (about derivations and smoothness) in the open case of surfaces \cite[Section 4]{MN2017}.
    
    It is straightforward to see that a $\mm$-full ideal is weakly $\mm$-full. A \textit{reduction} of $\mm$ is an ideal $I$ such that $I\mm^k=\mm^{k+1}$ for some $k$. If $R$ is regular (with infinite residue field $R/\mm$) and $I$ is a reduction of $\mm$, then $I$ is $\mm$-full \cite[Corollary 3.11]{MNQ23}. If $R/\mm$ is infinite, then integrally closed ideals are $\mm$-full \cite[Theorem (2.4)]{Goto87}. If $R=K[x_1,\dots,x_n]$ is the polynomial ring with coefficients over an infinite field $K$, then any componentwise linear ideal is $\mm$-full \cite{CDNR10} (see also \cite[Proposition 18]{HW12}). We extend this result to any Koszul algebra (Corollary \ref{Cor:CLIDao}). In \cite{DaoK20} \textit{Burch ideals} are introduced. An ideal $I$ is called \textit{Burch} if $(I\mm:\mm)\ne(I:\mm)$. If $\depth R/I=0$ and $I$ is weakly $\mm$-full, then $I$ is Burch \cite[Corollary 2.4]{DaoK20}.
    
    The next scheme summarizes the relationship between the concepts discussed.
    
    \[\begin{matrix}
    	{\textup{Componentwise linear}}\,\,\,\,\,\,\,\,\,\,\,\,\,\,\,\,\,\,\,\,\,\,\,\,\,\,\,\,\,\,\,\,\\[6pt]
    	\,\,\,\,\,\,\rotatebox{90}{$\Longleftarrow$}\,\,\substack{\big(\\ \phantom{ll}\\ \phantom{..}}\substack{\small\textup{if}\ R\ \textup{is a}\\ \small \textup{Koszul algebra}\\ \phantom{aaaa}\\ \phantom{..}}\substack{\big)\\ \phantom{ll}\\ \phantom{..}}\\[-10pt]
    	\textup{Reduction of}\ \mm\,\,\,\xRightarrow{\substack{(\small R\ 	\textup{regular})\\ \phantom{...}}}\,\,\,{\mathfrak{m}\textup{-full}}\,\,\Longrightarrow\,\,\textup{Weakly}\ \mathfrak{m}\textup{-full}\,\, \xRightarrow{\substack{(\depth R/I=0)\\ \phantom{aa}}}\,\,\textup{Burch} \\[7pt]
    	\,\,\!\,\rotatebox{90}{$\Longrightarrow$}\,\,\substack{\big(\\ \phantom{l}\\ \phantom{.}}\substack{{\small\textup{if}\ R/\mm\ \textup{is an}}\\ \small\textup{infinite field}\\[6pt] \phantom{aaa}}\substack{\big)\\ \phantom{l}\\ \phantom{.}}\\[-3pt]
    	{\textup{Integrally closed}}\,\,\,\,\,\,\,\,\,\,\,\,\,\,\,\,\,\,\,\,\,\,\,\,\,\,\,\,\,\,\,\,\,\,
    \end{matrix}\]\medskip
    
    In the present paper, we determine general bounds for the Dao numbers in both the local and graded settings. Moreover, we will consider Koszul algebras and monomial ideals, and provide more specific bounds for the Dao numbers in such cases.
    
    The paper proceeds as follows. In Section \ref{Sec:Dao1} we address Dao's question and we bound the Dao numbers of any ideal $I$. For this aim, we introduce the \textit{Dao module} $\mathfrak{D}_\mm(I)$ of $I$ which is defined as $\bigoplus_{k\ge0}(I\mm^{k+1}:\mm)/(I\mm^k)$ and has the structure of a module over the Rees algebra $\mathcal{R}(\mm)=\bigoplus_{k\ge0}\mm^k$ of $\mm$. The $k$th component of this module is zero if and only if $I\mm^k$ is weakly $\mm$-full. Since $\mathfrak{D}_\mm(I)_k=0$ for all $k\ge\mathfrak{d}_3(I)=\mathfrak{d}_1(I)$, it follows that $\mathfrak{D}_\mm(I)$ has finite length, and thus is a finitely generated $\mathcal{R}(\mm)$-module. In Corollary \ref{Cor:regD123-Dao} we note that if $I$ is not weakly $\mm$-full, then $\mathfrak{d}_1(I)=\mathfrak{d}_3(I)=\reg_{\mathcal{R}_\mm(I)}\mathfrak{D}_\mm(I)+1$. In our main Theorem \ref{Thm:DaoInv} we prove that:
    $$
    \mathfrak{d}_2(I)\le \mathfrak{d}_3(I)=\mathfrak{d}_1(I)\ \le \ \max\{\reg_{\mathcal{R}(\mm)}\mathcal{R}(\mm,I),\,\reg_{\mathcal{R}(\mm)}\mathcal{R}(\mm,I)_{\ge1}:_{\mathcal{R}(R)}\mm\}.
    $$
    Here $\mathcal{R}(\mm,I)=\bigoplus_{k\ge0}I\mm^k$ is the extension of the ideal $I$ in the ring $\mathcal{R}(\mm)$, and $\mathcal{R}(\mm,I)_{\ge1}:_{\mathcal{R}(R)}\mm=\bigoplus_{k\ge0}(I\mm^{k+1}:\mm)$. Under the more restrictive assumption that $\depth\gr_\mm(I)>0$ (here $\gr_\mm(I)=\bigoplus_{k\ge0}(I\mm^k/I\mm^{k+1})$ is the associated graded module of $I$), or when $R$ is regular (Theorems \ref{Thm:depth-gr-Dao} and Corollary \ref{Cor:reg-loc-Dao}) then we even have
    $$
    \mathfrak{d}_2(I)\le\mathfrak{d}_3(I)=\mathfrak{d}_1(I)\ \le\ \reg_{\mathcal{R}(\mm)}\mathcal{R}(\mm,I).
    $$
    This bounds holds because $\mathfrak{D}_\mm(I)$ is equal to $(0:_{\mathcal{R}(\mm)/\mathcal{R}(\mm,I)}\mathcal{R}(\mm)_+)$ under the above assumptions (Corollary \ref{Cor:Dao-m-Soc}). Due to these results, we could expect that:
    \begin{Conjecture}
    	For all ideals $I\subset R$:\ \ $\mathfrak{d}_2(I)\le\mathfrak{d}_3(I)=\mathfrak{d}_1(I)\ \le\ \reg_{\mathcal{R}(\mm)}\mathcal{R}(\mm,I)$.
    \end{Conjecture}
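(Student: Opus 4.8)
The plan is to reduce the conjecture to a statement about the top nonvanishing degree of the Dao module, and then to control the part of that module which is not visible inside $\mathcal{R}(\mm)/\mathcal{R}(\mm,I)$. Since the inequalities $\mathfrak{d}_2(I)\le\mathfrak{d}_3(I)=\mathfrak{d}_1(I)$ are already known, and since by Corollary \ref{Cor:regD123-Dao} one has $\mathfrak{d}_1(I)=\reg_{\mathcal{R}(\mm)}\mathfrak{D}_\mm(I)+1$ whenever $I$ is not weakly $\mm$-full (the weakly $\mm$-full case being trivial, as then $\mathfrak{d}_1(I)=0$), it suffices to prove $\reg_{\mathcal{R}(\mm)}\mathfrak{D}_\mm(I)<\reg_{\mathcal{R}(\mm)}\mathcal{R}(\mm,I)$. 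As $\mathfrak{D}_\mm(I)$ has finite length, its regularity equals its top nonvanishing degree, so the goal becomes: if $\mathfrak{D}_\mm(I)_k\ne0$, then $\reg_{\mathcal{R}(\mm)}\mathcal{R}(\mm,I)\ge k+1$.

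First I would realize $\mathfrak{D}_\mm(I)$ as a socle module. Put $N'=\bigoplus_{k\ge0}(\mm^{k+1}:\mm)$ and $\tilde M=N'/\mathcal{R}(\mm,I)$; since $a\in(I\mm^{k+1}:\mm)$ forces $a\mm\subseteq\mm^{k+1}$, one checks directly that $\mathfrak{D}_\mm(I)=(0:_{\tilde M}\mathcal{R}(\mm)_+)\subseteq H^0_{\mathcal{R}(\mm)_+}(\tilde M)$, with no hypothesis on depth. The chain $\mathcal{R}(\mm,I)\subseteq\mathcal{R}(\mm)\subseteq N'$ then yields the short exact sequence $0\to M\to\tilde M\to T\to0$, where $M=\mathcal{R}(\mm)/\mathcal{R}(\mm,I)$ and $T=N'/\mathcal{R}(\mm)=\bigoplus_k(\mm^{k+1}:\mm)/\mm^k$ is annihilated by $\mathcal{R}(\mm)_+$ and depends only on $R$, not on $I$.

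The contribution of $M$ is under control. Because $\depth R>0$, a regular element $x\in\mm$ makes $xt\in\mathcal{R}(\mm)_1$ a nonzerodivisor on $\mathcal{R}(\mm)$, so $H^0_{\mathcal{R}(\mm)_+}(\mathcal{R}(\mm))=0$; feeding $0\to\mathcal{R}(\mm,I)\to\mathcal{R}(\mm)\to M\to0$ into local cohomology gives an injection $H^0_{\mathcal{R}(\mm)_+}(M)\hookrightarrow H^1_{\mathcal{R}(\mm)_+}(\mathcal{R}(\mm,I))$, whence the top degree of the submodule $S:=(0:_M\mathcal{R}(\mm)_+)$ is at most the top nonvanishing degree of $H^1_{\mathcal{R}(\mm)_+}(\mathcal{R}(\mm,I))$, which is $\le\reg_{\mathcal{R}(\mm)}\mathcal{R}(\mm,I)-1$. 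Thus the part of $\mathfrak{D}_\mm(I)$ lying in $S$ already obeys the desired bound; this is exactly how the conjectured inequality is obtained when $\depth\gr_\mm(I)>0$, since there $\mathfrak{D}_\mm(I)=S$ by Corollary \ref{Cor:Dao-m-Soc}.

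The crux is the remaining quotient $\mathfrak{D}_\mm(I)/S$, which injects into $T$. Its nonvanishing in degree $k$ means there is $a\in(I\mm^{k+1}:\mm)$ with $a\notin\mm^k$, hence $(\mm^{k+1}:\mm)\supsetneq\mm^k$ together with the sharper condition $a\mm\subseteq I\mm^{k+1}$. The hard part, and the reason the statement remains a conjecture, is to show that such an anomalous element forces $\reg_{\mathcal{R}(\mm)}\mathcal{R}(\mm,I)\ge k+1$; equivalently, that $\reg_{\mathcal{R}(\mm)}(\mathcal{R}(\mm,I)_{\ge1}:_{\mathcal{R}(R)}\mm)\le\reg_{\mathcal{R}(\mm)}\mathcal{R}(\mm,I)$, so that the second term in the bound of Theorem \ref{Thm:DaoInv} never dominates. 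Since a colon with the maximal ideal can raise regularity for a general module, any proof must exploit the specific Rees-module structure. My proposed route is a generic superficial element argument: as $K$ is infinite, choose a generic $x\in\mm-\mm^2$ and compare the fullness of $I\mm^k$ and the regularity of $\mathcal{R}(\mm,I)$ with the corresponding data modulo $x$, where the $\gr_\mm(R)$-torsion $T$ is killed and one lands in a positive-depth situation covered by the known case. The delicate point, and the likely obstacle, is to verify that this passage preserves $\reg_{\mathcal{R}(\mm)}\mathcal{R}(\mm,I)$ while eliminating the torsion contribution, i.e. that $x$ can be chosen simultaneously generic for $M$, $\tilde M$, and $T$ and filter-regular on $\mathcal{R}(\mm,I)$.
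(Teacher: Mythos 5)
First, a point of framing: the statement you set out to prove is stated in the paper as a \emph{Conjecture}. The paper itself has no proof of it; it only proves the weaker general bound of Theorem \ref{Thm:DaoInv} (a maximum of two regularities) and the conjectured bound under the extra hypotheses $\depth\gr_\mm(R)>0$ or $R$ regular (Theorem \ref{Thm:depth-gr-Dao}, Corollary \ref{Cor:reg-loc-Dao}). Your reduction reproduces exactly this state of affairs, and in places more cleanly than the paper: the identification $\mathfrak{D}_\mm(I)=(0:_{\tilde M}\mathcal{R}(\mm)_+)$ inside the larger module $\tilde M=\bigl(\bigoplus_{k\ge0}(\mm^{k+1}:\mm)\bigr)/\mathcal{R}(\mm,I)$ is correct and needs no depth hypothesis (whereas Corollary \ref{Cor:Dao-m-Soc} needs $\depth\gr_\mm(R)>0$); the exact sequence $0\to M\to\tilde M\to T\to0$ with $M=\mathcal{R}(\mm)/\mathcal{R}(\mm,I)$ and $T=\bigoplus_k(\mm^{k+1}:\mm)/\mm^k$ correctly isolates where the difficulty lives; and bounding the top degree of the socle $S$ of $M$ via $H^0_{\mathcal{R}(\mm)_+}(M)\hookrightarrow H^1_{\mathcal{R}(\mm)_+}(\mathcal{R}(\mm,I))$ is a valid local-cohomology alternative to the Koszul-homology argument (facts (iv)--(v)) used in the proof of Theorem \ref{Thm:depth-gr-Dao}. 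One slip worth fixing: the dichotomy in Corollary \ref{Cor:regD123-Dao} is governed by $\mathfrak{D}_\mm(I)\ne0$, not by whether $I$ itself is weakly $\mm$-full; if $I$ is weakly $\mm$-full it does \emph{not} follow that $\mathfrak{d}_1(I)=0$, only that $\mathfrak{D}_\mm(I)_0=0$, so your case division should be on the vanishing of $\mathfrak{D}_\mm(I)$ (the argument is unaffected, since the corollary applies whenever $\mathfrak{D}_\mm(I)\ne0$).

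The genuine gap is the one you yourself flag: nothing is proved about $\mathfrak{D}_\mm(I)/S\hookrightarrow T$, i.e.\ about elements of $(I\mm^{k+1}:\mm)$ not lying in $\mm^k$, and this is precisely the open content of the conjecture. Moreover, the superficial-element plan you sketch faces concrete obstructions, not just unverified details. Such anomalous elements can exist only when $\depth\gr_\mm(R)=0$ (otherwise $T=0$ by the claim (\ref{eq:claimDao}) applied with $I=R$), and passing to $R/(x)$ for a generic $x\in\mm-\mm^2$ does not obviously repair this: the surjection $\gr_\mm(R)/(x^*)\to\gr_{\bar\mm}(R/(x))$, $x^*$ the initial form, is an isomorphism when $x^*$ is $\gr_\mm(R)$-regular, but no such element exists exactly in the problematic case $\depth\gr_\mm(R)=0$. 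On the Rees side, $\mathcal{R}(\bar\mm,\bar I)$ is not $\mathcal{R}(\mm,I)/xt\,\mathcal{R}(\mm,I)$: the kernel of $\mathcal{R}(\mm,I)\to\mathcal{R}(\bar\mm,\bar I)$ is $\bigoplus_k\bigl(I\mm^k\cap xR\bigr)$, which generally strictly contains $\bigoplus_k xI\mm^{k-1}$, so even a filter-regular choice of $xt$ does not let you compare $\reg_{\mathcal{R}(\mm)}\mathcal{R}(\mm,I)$ with $\reg_{\mathcal{R}(\bar\mm)}\mathcal{R}(\bar\mm,\bar I)$. Finally, lifting weak $\mm$-fullness from $R/(x)$ back to $R$ requires control of $(I\mm^{k+1}:\mm)\cap(x)$, which is essentially equivalent to the statement being proved. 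So your write-up is an accurate map of what is known and of where the wall stands, but it is not a proof; the conjecture remains open after it, just as it does in the paper.
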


    In Corollary \ref{Cor:MNQ} we reobtain the nice result \cite[Corollary 3.11]{MNQ23}, due to Miranda Neto and Queiroz, which says that $\mathfrak{d}_i(I)=0$ for all $i$ if $I$ is a reduction of the maximal ideal $\mm$ and $R$ is regular.
    
    In Section \ref{Sec:Dao2}, we consider the more tame situation of a Koszul algebra. A standard graded $K$-algebra $R$ is \textit{Koszul} if $\reg_RK=0$. A homogeneous ideal $I$ of a Koszul algebra $R$ is called \textit{componentwise linear} if $I_{\langle j\rangle}$ has a linear resolution for all $j$. It turns out that componentwise linear ideals are full, $\mm$-full and weakly $\mm$-full (Corollary \ref{Cor:CLIDao}). In the polynomial ring case, componentwise linear ideals coincide with the so-called \textit{completely $\mm$-full ideals} \cite{HW15}. So the above result is not surprising. To the best of our knowledge, the concept of completely $\mm$-full ideal has not been defined yet in a Koszul algebra. In Theorem \ref{Thm:FHM-Dao}, we notice that for all $k\ge\reg_{\gr_\mm(R)}\gr_\mm(I)$, $I\mm^k$ is componentwise linear. Hence, the number $\reg_{\gr_\mm(R)}\gr_\mm(I)$ is an upper bound for the Dao numbers of $I$. If $R$ is a strongly Koszul algebra, any monomial ideal $I\subset R$ with linear quotients order $\mathcal{O}:u_1<\dots<u_m$ with $\deg(u_1)\le\dots\le\deg(u_m)$ is componentwise linear (Proposition \ref{Prop:LQ-CL-Dao}). If $R$ is the polynomial ring $K[x_1,\dots,x_n]$, then by the Bj\"orner-Wachs rearrangement lemma \cite[Lemma 2.1]{JZ10} (see also \cite{BW96}) the condition $\deg(u_1)\le\dots\le\deg(u_m)$ can be omitted. It is not clear whether this is also the case for any strongly Koszul algebra.
    
    In the final Section \ref{Sec:Dao3}, we consider monomial ideals in the standard graded polynomial ring $S=K[x_1,\dots,x_n]$. Let $I\subset S$ be a monomial ideal with minimal monomial generating set $\mathcal{G}(I)$. It was noted in \cite[Theorem 1.5]{FHM23} (Proposition \ref{Prop:FHM-Dao}) that $I\mm^k$ has linear quotients for all $k\gg0$, which strengthens the fact that $I\mm^k$ is componentwise linear for all $k\gg0$. Analyzing carefully the proof of this fact, which is based on combinatorial arguments, we obtain in Theorem \ref{Thm:LinQuotBoundDao} the combinatorial bound
    \[
    \mathfrak{d}_2(I)\le\mathfrak{d}_3(I)=\mathfrak{d}_1(I)\ \le\ \reg_{\gr_\mm(S)}\gr_\mm(I)\ \le\ \min_{\mathcal{O}}(\max_{u\in\mathcal{G}(I)}\lambda_{I,\mathcal{O},u}).
    \]
    Here the minimum runs through all orders $\mathcal{O}$ of $\mathcal{G}(I)$. In Corollary \ref{Cor:d1d2d3Dao} we prove that $\reg_{\gr_\mm(S)}\gr_\mm(I)\le(\sum_{u\in\mathcal{G}(I)}\deg(u))+1-\mu(I)-\omega(I)$. This bound is sharp. Indeed, equality holds for monomial complete intersections \cite[Theorem 4.1(b)]{ACF3}. Next, in Theorem \ref{Thm:GraphsDao}, we show that $\mathfrak{d}_i(I)\le\max\{|{\bf deg}(I)|-n,0\}$ for $i=1,2,3$, where ${\bf deg}(I)$ is the bounding multidegree of $I$ \cite{F2}. In particular, if $I$ is squarefree, then all Dao numbers are zero. In Corollary \ref{Cor:CIDao} we compute the Dao numbers of monomial complete intersections. This result shows that in general it may be difficult to compute the Dao numbers of a general complete intersection ideal. 
    \section{General bounds for the Dao numbers}\label{Sec:Dao1}
    
    In this first section, we answer Dao's question. For this aim, we need to consider Castelnuovo--Mumford regularity over general base rings.
    
    Hereafter, we follow closely \cite[Section 8.1]{BCRV22}. Let $R=\bigoplus_{k\ge0}R_k$ be a standard graded Noetherian algebra. That is, $R_0$ is a commutative Noetherian ring and $R$ is generated as an $R_0$-algebra by finitely many elements $f_1,\dots,f_n$ of degree one. Let $Q=R_+=\bigoplus_{k>0}R_k$ be the ideal of $R$ generated by the elements of positive degree. Let $M=\bigoplus_{k\ge0}M_k$ be a finitely generated graded $R$-module. The \textit{initial degree} and the \textit{final degree} of $M$, denoted by $\alpha(M)$ and $\omega(M)$, are defined as
    \begin{align*}
    	\alpha(M)\ &=\ \min\{j\ :\ (M/Q M)_j\ne0\}\ =\ \min\{j\ :\ M_j\ne0\},\\
    	\omega(M)\ &=\ \max\{j\ :\ (M/Q M)_j\ne0\}.
    \end{align*}

    The \textit{Castelnuovo--Mumford regularity} of $M$ is defined as \cite[Theorem 8.1.3]{BCRV22}:
    \begin{align*}
    	\reg_R M\ &=\ \max\{j+i\ :\ \textup{H}^i_Q(M)_j\ne0\}\\
    	&=\ \max\{j-i\ :\ H_i({\bf y};M)_j\ne0\}.
    \end{align*}
    Here $\textup{H}_{Q}^i(M)$ is the $i$th local cohomology module of $M$ with support on $Q$, and $H_i({\bf y};M)$ is the $i$th Koszul homology module of ${\bf y}:y_1,\dots,y_n$ with respect to $M$, where ${\bf y}$ is a minimal homogeneous system of generators of $Q$.\medskip
    
    We record some basic properties which we will need in a moment. These properties are also stated in \cite[Page 277, (a), (c) and (d)]{BCRV22} and \cite[page 268]{HH2011}.\smallskip
    \begin{enumerate}
    	\item[(i)] Let $M(j)$ be the module $M$ whose degrees are shifted by $j$: $M(j)_i=M_{i+j}$ for all $i$. Then $\omega(M(j))=\omega(M)-j$ and $\reg_R M(j)=\reg_RM-j$.
    	\item[(ii)] Let $0\rightarrow M\rightarrow N\rightarrow P\rightarrow0$ be a short exact sequence of finitely generated graded $R$-modules. Then,
    	\begin{align*}
    		\reg_R M\ &\le\ \max\{\reg_RN,\,\reg_RP+1\},\\
    		\reg_R P\ &\le\ \max\{\reg_RN,\,\reg_RM-1\}.
    	\end{align*}
    	\item[(iii)] If $M_j=0$ for all $j\gg0$, then $\reg_RM=\max\{j:M_j\ne0\}$.
    	\item[(iv)] \label{fact:page}Let $I\subset R$ be a homogeneous ideal and ${\bf y}:y_1,\dots,y_n$ be a minimal homogeneous system of generators of $Q$. Then
    	\begin{align*}
    		H_n({\bf y};R/I)\ &\cong\ e_1\wedge\cdots\wedge e_n\ \textup{Soc}_Q(R/I)\ =\ e_1\wedge\cdots\wedge e_n\ (0:_{R/I}Q)\ \\&=\ e_1\wedge\cdots\wedge e_n\ (I:Q)/I.
    	\end{align*}
    	Here, $\textup{Soc}_Q(R/I)$ is called the \textit{socle} of $I$ and $e_1,\dots,e_n$ is the basis of a graded free $R$-module of rank $n$, with $\deg e_i=1$ for all $i$. Hence, we have
    	\begin{align*}
    		\max\{j\ :\ \textup{Soc}_Q(R/I)_j\ne0\}\ &=\ \max\{j-n\ :\ H_n({\bf y};R/I)_j\ne0\}\\
    		&\le\ \max\{j-i\ :\ H_i({\bf y};R/I)_j\ne0\}\ \\&=\ \reg_R R/I.
    	\end{align*}
        \item[(v)] Let $I\subset R$ be an homogeneous ideal. Then $\reg_R R/I=\reg_RI-1$. This follows because $H_i({\bf y};R/I)_j\cong H_{i-1}({\bf y};I)_j$ for all $i$, where ${\bf y}$ is a minimal system of generators of $Q$.
    \end{enumerate}\medskip

    Let $M=\bigoplus_{k\ge0}M_k$ be a graded $R$-module and let $\ell\ge0$ be a positive integer. Then $M_{\ge\ell}=\bigoplus_{k\ge\ell}M_k$ is called a truncation of $M$.\medskip

    We are now ready to deliver the promised bound for the Dao numbers.\medskip
    
    Let $R$ be a commutative ring and $I$ be an ideal. The \textit{Rees algebra} of $I$ is the graded ring $\mathcal{R}(I)=\bigoplus_{k\ge0}I^k$. Let $J\subset R$ be another ideal, then $\mathcal{R}(I,J)=\bigoplus_{k\ge0}JI^k$ is the extension of $J\subset R$ in the ring $\mathcal{R}(I)$. In particular, $\mathcal{R}(I,J)$ is a finitely generated ideal of $\mathcal{R}(I)$ if $J$ is a finitely generated ideal of $R$.
    
    \begin{Theorem}\label{Thm:DaoInv}
    	Let $(R,\mm,K)$ be either a Noetherian local ring or a standard graded Noetherian $K$-algebra with unique homogeneous maximal ideal $\mm$. We assume that $K$ is infinite and $\depth R>0$. Let $I\subset R$ be an ideal. We assume that $I$ is homogeneous if $R$ is a $K$-algebra. Then
    	$$
    	\mathfrak{d}_2(I)\le \mathfrak{d}_3(I)=\mathfrak{d}_1(I)\ \le \ \max\{\reg_{\mathcal{R}(\mm)}\mathcal{R}(\mm,I),\,\reg_{\mathcal{R}(\mm)}\mathcal{R}(\mm,I)_{\ge1}:_{\mathcal{R}(R)}\mm\}.
    	$$
    	Here $\mathcal{R}(\mm,I)_{\ge1}:_{\mathcal{R}(R)}\mm=\mathcal{R}(\mm,I)_{\ge1}:_{\mathcal{R}(R)}\mathcal{R}(\mm,\mm)=\bigoplus_{k\ge0}(I\mm^{k+1}:\mm)$.
    \end{Theorem}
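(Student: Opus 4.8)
The plan is to deduce the whole chain from one regularity estimate for the Dao module. Since \cite[Proposition 2.2]{MNQ23} already supplies $\mathfrak{d}_2(I)\le\mathfrak{d}_3(I)=\mathfrak{d}_1(I)$, I only need to bound $\mathfrak{d}_3(I)$. Write $B=\mathcal{R}(\mm,I)_{\ge1}:_{\mathcal{R}(R)}\mm$, so that the graded component of $B$ in degree $k\ge1$ is $(I\mm^{k}:\mm)$; as a colon of finitely generated submodules inside the Noetherian ring $\mathcal{R}(\mm)$ it is again a finitely generated graded $\mathcal{R}(\mm)$-module. The key point is that the inclusions $I\mm^k\subseteq(I\mm^{k+1}:\mm)$ are the degree-$k$ strands of a short exact sequence of graded $\mathcal{R}(\mm)$-modules
\[
0\longrightarrow\mathcal{R}(\mm,I)\longrightarrow B(1)\longrightarrow\mathfrak{D}_\mm(I)\longrightarrow0,
\]
whose $k$-th piece is $0\to I\mm^k\to(I\mm^{k+1}:\mm)\to(I\mm^{k+1}:\mm)/I\mm^k\to0$. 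Getting this one-step twist right is the first delicate point, since it is exactly what converts a naive $\reg B+1$ into the sharp $\reg B$.

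Next I would compute $\reg_{\mathcal{R}(\mm)}\mathfrak{D}_\mm(I)$. By construction $\mathfrak{D}_\mm(I)_k$ vanishes precisely when $I\mm^k$ is weakly $\mm$-full, so only finitely many components survive and, when $\mathfrak{D}_\mm(I)\neq0$, the top nonvanishing degree is $\mathfrak{d}_3(I)-1$. Property (iii) then yields $\reg_{\mathcal{R}(\mm)}\mathfrak{D}_\mm(I)=\mathfrak{d}_3(I)-1$, which is the content of Corollary \ref{Cor:regD123-Dao}. If instead $\mathfrak{D}_\mm(I)=0$ the statement is immediate, since then $\mathfrak{d}_3(I)=0$ while the right-hand side is nonnegative.

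It remains to combine these facts. Feeding the displayed short exact sequence into the second inequality of property (ii) gives $\reg\mathfrak{D}_\mm(I)\le\max\{\reg B(1),\,\reg\mathcal{R}(\mm,I)-1\}$, and the shift rule in property (i) rewrites $\reg B(1)=\reg B-1$. Hence $\reg\mathfrak{D}_\mm(I)+1\le\max\{\reg_{\mathcal{R}(\mm)}\mathcal{R}(\mm,I),\,\reg_{\mathcal{R}(\mm)}B\}$, and the left-hand side equals $\mathfrak{d}_3(I)$ by the previous paragraph. Together with $\mathfrak{d}_2(I)\le\mathfrak{d}_3(I)=\mathfrak{d}_1(I)$ this is the assertion.

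The genuine obstacle here is not computational but a matter of resisting a false improvement. Because $B$ embeds in $\mathcal{R}(R)$ and $\depth R>0$, one is tempted to argue that $B$ carries no $\mathcal{R}(\mm)_+$-torsion, whence the connecting homomorphism would embed $\mathfrak{D}_\mm(I)$ into $H^1_{\mathcal{R}(\mm)_+}(\mathcal{R}(\mm,I))$ and collapse the bound to the single term $\reg\mathcal{R}(\mm,I)$. That statement is exactly the Conjecture, so it cannot hold in this generality; the subtlety of Castelnuovo--Mumford regularity over the non-field base $R_0=R$ is precisely what blocks the refinement. I would therefore use only the crude form of (ii) here, and postpone the sharper estimate to the settings ($\depth\gr_\mm(I)>0$, or $R$ regular) where $\mathfrak{D}_\mm(I)$ acquires its socle description.
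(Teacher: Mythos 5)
Your argument is, in structure, exactly the paper's: the same module $\mathfrak{D}_\mm(I)$, the same twisted short exact sequence $0\rightarrow\mathcal{R}(\mm,I)\rightarrow B(1)\rightarrow\mathfrak{D}_\mm(I)\rightarrow 0$, the identity $\reg_{\mathcal{R}(\mm)}\mathfrak{D}_\mm(I)=\mathfrak{d}_3(I)-1$ from property (iii), rules (i)--(ii) applied to that sequence, and \cite[Proposition 2.2]{MNQ23} to finish.

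The one step that does not hold up as written is your justification that $B=\mathcal{R}(\mm,I)_{\ge1}:_{\mathcal{R}(R)}\mm$ is a finitely generated $\mathcal{R}(\mm)$-module. You call it ``a colon of finitely generated submodules inside the Noetherian ring $\mathcal{R}(\mm)$'', but the colon is computed inside $\mathcal{R}(R)\cong R[t]$, not inside $\mathcal{R}(\mm)$, and Noetherianity of $\mathcal{R}(\mm)$ buys you nothing there: $\mathcal{R}(R)$ is not a finitely generated $\mathcal{R}(\mm)$-module (the quotient $\mathcal{R}(R)/\mathcal{R}(\mm)_+\mathcal{R}(R)$ equals $R/\mm\neq 0$ in every positive degree), so a colon submodule of it has no a priori reason to be finitely generated. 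Nor can you instead place $B$ inside $\mathcal{R}(\mm)$: the containment $(I\mm^{k+1}:\mm)\subseteq\mm^{k}$ is exactly what the paper proves under the extra hypothesis $\depth\gr_\mm(R)>0$ (Theorem \ref{Thm:depth-gr-Dao}), and if it held in general then fact (iv) would identify $\mathfrak{D}_\mm(I)$ with $\textup{Soc}_{\mathcal{R}(\mm)_+}(\mathcal{R}(\mm)/\mathcal{R}(\mm,I))$ and yield the single-term bound of the Conjecture --- precisely the ``false improvement'' you rightly refuse in your closing paragraph, so this justification is in tension with your own final remark. The repair uses only your own ingredients and is what the paper does: each graded piece $(I\mm^{k+1}:\mm)/I\mm^{k}$ of $\mathfrak{D}_\mm(I)$ is a finitely generated $R$-module and vanishes for $k\ge\mathfrak{d}_3(I)$ (finite by Dao's theorem), so $\mathfrak{D}_\mm(I)$ is a finitely generated $\mathcal{R}(\mm)$-module of finite length; then $B(1)$, sitting in your short exact sequence as an extension of $\mathfrak{D}_\mm(I)$ by the finitely generated module $\mathcal{R}(\mm,I)$, is finitely generated as well, after which rules (i)--(iii) apply legitimately. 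With that substitution your proof coincides with the paper's.
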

    \begin{proof}
    	Notice that for all $k\ge0$ we have $I\mm^{k}\subseteq(I\mm^{k+1}:\mm)$. Therefore, we obtain the short exact sequence
    	\begin{equation}\label{eq:ShortExactDao}
    		0\rightarrow I\mm^{k}\rightarrow (I\mm^{k+1}:\mm)\rightarrow\frac{(I\mm^{k+1}:\mm)}{I\mm^{k}}\rightarrow0.
    	\end{equation}
    	
    	The Rees algebra $\mathcal{R}(\mm)$ of $\mm$ is a standard graded Noetherian $R$-algebra. Since $\mathcal{R}(\mm,I)=\bigoplus_{k\ge0}I\mm^k$ is just the extension of the ideal $I\subset R$ in the ring $\mathcal{R}(\mm)$, it is a finitely generated graded $\mathcal{R}(\mm)$-module. Taking the direct sum of the exact sequences (\ref{eq:ShortExactDao}) for all $k\ge0$, we obtain the short exact sequence:
    	\begin{equation}\label{eq:ShortExactDao1}
    		0\rightarrow\bigoplus_{k\ge0} I\mm^{k}\rightarrow \bigoplus_{k\ge0}(I\mm^{k+1}:\mm)\rightarrow \bigoplus_{k\ge0}\frac{(I\mm^{k+1}:\mm)}{I\mm^{k}}\rightarrow 0.
    	\end{equation}
    	Notice that
    	\begin{equation}\label{eq:colonDao}
    	\begin{aligned}
    		\mathcal{R}(\mm,I)_{\ge1}(1):_{\mathcal{R}(R)}\mm\ \ &=\ \ (\bigoplus_{k\ge0}I\mm^{k+1}):_{\mathcal{R}(R)}\mm\ =\ \bigoplus_{k\ge0}(I\mm^{k+1}:\mm)\\
    		&=\ \ (\mathcal{R}(\mm,I)_{\ge1}:_{\mathcal{R}(R)}\mm)(1).
    	\end{aligned}
    	\end{equation}
        
        We set $\mathfrak{D}_\mm(I)=(\mathcal{R}(\mm,I)_{\ge1}:_{\mathcal{R}(R)}\mm)(1)/\mathcal{R}(\mm,I)$ and notice that for all $k$,
        $$
        \mathfrak{D}_\mm(I)_k\ =\ \frac{(I\mm^{k+1}:\mm)}{I\mm^k}.
        $$
        
        By \cite[Theorem 3.1 and Corollary 3.2]{Dao21}, we have that $I\mm^k$ is weakly $\mm$-full for all $k\ge\mathfrak{d}_3(I)$, that is $(I\mm^{k+1}:\mm)=I\mm^k$. Thus, we have $\mathfrak{D}_\mm(I)_k=0$ for all $k\ge\mathfrak{d}_3(I)$, and if $\mathfrak{d}_3(I)-1\ge0$ then $\mathfrak{D}_\mm(I)_{\mathfrak{d}_3(I)-1}\ne0$. It follows that $\mathfrak{D}_\mm(I)$ is a finitely generated $\mathcal{R}(\mm)$-module, and from (iii)
        $$
        \reg_{\mathcal{R}(\mm)}\mathfrak{D}_\mm(I)\ =\ \max\{\mathfrak{d}_3(I)-1,\,0\}.
        $$
        
        If $\mathfrak{d}_3(I)=0$ there is nothing to prove. Thus, we suppose that $\mathfrak{d}_3(I)>0$. Then $\reg_{\mathcal{R}(\mm)}\mathfrak{D}_\mm(I)=\mathfrak{d}_3(I)-1$. From equations (\ref{eq:ShortExactDao1}) and (\ref{eq:colonDao}) we obtain the following short exact sequence of graded $\mathcal{R}(\mm)$-modules:
        \begin{equation}\label{eq:ShortExactDao2}
        	0\rightarrow\mathcal{R}(\mm,I)\rightarrow(\mathcal{R}(\mm,I)_{\ge1}:_{\mathcal{R}(R)}\mm)(1)\rightarrow\mathfrak{D}_\mm(I)\rightarrow0.
        \end{equation}
        
        Since $\mathcal{R}(\mm,I)$ and $\mathfrak{D}_\mm(I)$ are finitely generated $\mathcal{R}(\mm)$-modules, it follows that $(\mathcal{R}(\mm,I)_{\ge1}:_{\mathcal{R}(R)}\mm)(1)$ is a finitely generated graded $\mathcal{R}(\mm)$-module, as well. Therefore, applying the rules (ii) and (i) to the sequence (\ref{eq:ShortExactDao2}) we obtain that
    	\begin{align*}
    		\reg_{\mathcal{R}(\mm)}\mathfrak{D}_\mm(I)\ &\le\ \max\{\reg_{\mathcal{R}(\mm)}(\mathcal{R}(\mm,I)_{\ge1}:_{\mathcal{R}(R)}\mm)(1),\,\reg_{\mathcal{R}(\mm)}\mathcal{R}(\mm,I)-1\}\\
    		& \le\ \max\{\reg_{\mathcal{R}(\mm)}\mathcal{R}(\mm,I)_{\ge1}:_{\mathcal{R}(R)}\mm,\,\reg_{\mathcal{R}(\mm)}\mathcal{R}(\mm,I)\}-1.
    	\end{align*}
    
        Taking into account that $\reg_{\mathcal{R}(\mm)}\mathfrak{D}_\mm(I)=\mathfrak{d}_3(I)-1$, we obtain the asserted inequality for $\mathfrak{d}_3(I)$. Finally, under the assumptions that $K$ is infinite and $\depth R>0$, it follows from \cite[Proposition 2.2]{MNQ23} that $\mathfrak{d}_2(I)\le\mathfrak{d}_1(I)=\mathfrak{d}_3(I)$.
\end{proof}
    
    We call the $\mathcal{R}(\mm)$-module, considered in the above proof,
    $$
    \mathfrak{D}_\mm(I)\ =\ \bigoplus_{k\ge0}\frac{(I\mm^{k+1}:\mm)}{I\mm^k}
    $$
    the \textit{Dao module} of $I$. As shown in the proof, we have
    \begin{Corollary}\label{Cor:regD123-Dao}
    	Under the same assumptions and notation of Theorem \ref{Thm:DaoInv}, we have
    	$$
    	\mathfrak{d}_2(I)\ \le\ \mathfrak{d}_1(I)=\mathfrak{d}_3(I)\ =\ \begin{cases}
    		\reg_{\mathcal{R}(\mm)}\mathfrak{D}_\mm(I)+1&\textit{if}\ \mathfrak{D}_\mm(I)\ne0,\\
    		\hfill 0&\hfill\textit{otherwise.}
    	\end{cases}
    	$$
    \end{Corollary}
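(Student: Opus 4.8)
The plan is to read the corollary off directly from the identity $\reg_{\mathcal{R}(\mm)}\mathfrak{D}_\mm(I)=\max\{\mathfrak{d}_3(I)-1,\,0\}$ already derived inside the proof of Theorem \ref{Thm:DaoInv}, combined with fact (iii) on the regularity of a module that vanishes in all high degrees. The only genuine work is to separate the two cases according to whether $\mathfrak{D}_\mm(I)$ is zero, so I would organize the argument around that dichotomy.

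First I would recall from the construction that $\mathfrak{D}_\mm(I)_k=(I\mm^{k+1}:\mm)/I\mm^k$, so $\mathfrak{D}_\mm(I)_k=0$ holds precisely when $I\mm^k$ is weakly $\mm$-full. By the very definition of $\mathfrak{d}_3(I)$ as the least $\ell$ beyond which $I\mm^k$ is weakly $\mm$-full for all $k\ge\ell$, the nonvanishing graded pieces of $\mathfrak{D}_\mm(I)$ all occur in degrees $k<\mathfrak{d}_3(I)$, and in particular $\mathfrak{D}_\mm(I)_{\mathfrak{d}_3(I)-1}\ne0$ whenever $\mathfrak{d}_3(I)\ge1$. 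This pins down the top nonzero degree of $\mathfrak{D}_\mm(I)$ to be exactly $\mathfrak{d}_3(I)-1$ in the nontrivial case.

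Next I would split into cases. If $\mathfrak{d}_3(I)=0$, then every $I\mm^k$ is weakly $\mm$-full, so $\mathfrak{D}_\mm(I)=0$, which lands in the bottom branch. Conversely, if $\mathfrak{D}_\mm(I)\ne0$, then $\mathfrak{d}_3(I)\ge1$, and since $\mathfrak{D}_\mm(I)_k=0$ for all $k\ge\mathfrak{d}_3(I)$, fact (iii) applies and yields $\reg_{\mathcal{R}(\mm)}\mathfrak{D}_\mm(I)=\max\{k:\mathfrak{D}_\mm(I)_k\ne0\}=\mathfrak{d}_3(I)-1$; rearranging gives $\mathfrak{d}_3(I)=\reg_{\mathcal{R}(\mm)}\mathfrak{D}_\mm(I)+1$. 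The equality $\mathfrak{d}_1(I)=\mathfrak{d}_3(I)$ and the inequality $\mathfrak{d}_2(I)\le\mathfrak{d}_3(I)$ are imported verbatim from \cite[Proposition 2.2]{MNQ23}, using the standing hypotheses that $K$ is infinite and $\depth R>0$.

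Since every ingredient is already in hand, I do not expect a real obstacle: the statement is essentially a repackaging of intermediate lines of the previous proof. The only point deserving the slightest care is the edge case $\mathfrak{d}_3(I)=0$, where one must not invoke fact (iii) on the zero module but instead simply observe that this case is subsumed by the \emph{otherwise} branch.
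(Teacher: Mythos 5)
Your proposal is correct and follows essentially the same route as the paper, which presents this corollary as a direct byproduct of the proof of Theorem \ref{Thm:DaoInv}: there the identification $\mathfrak{D}_\mm(I)_k=(I\mm^{k+1}:\mm)/I\mm^k$, the observation that the top nonvanishing degree is $\mathfrak{d}_3(I)-1$ when $\mathfrak{d}_3(I)\ge1$, fact (iii), and \cite[Proposition 2.2]{MNQ23} yield exactly your argument. Your explicit case split on $\mathfrak{D}_\mm(I)=0$ is, if anything, slightly more careful than the paper's formula $\reg_{\mathcal{R}(\mm)}\mathfrak{D}_\mm(I)=\max\{\mathfrak{d}_3(I)-1,0\}$, which glosses over the regularity of the zero module.
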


    Recall that the \textit{associated graded ring} of $R$ is $\gr_\mm(R)=\bigoplus_{k\ge0}(\mm^k/\mm^{k+1})$. If $R$ is Noetherian, as we assume in our case, then $\gr_\mm(R)$ is a graded Noetherian ring. We can strengthen Theorem \ref{Thm:DaoInv} as follows.
    \begin{Theorem}\label{Thm:depth-gr-Dao}
    	Under the same assumptions and notation of Theorem \ref{Thm:DaoInv}, if we suppose in addition that $\depth\gr_\mm(R)>0$, then
    	$$
    	\mathfrak{d}_2(I)\le \mathfrak{d}_3(I)=\mathfrak{d}_1(I)\ \le\ \reg_{\mathcal{R}(\mm)}\mathcal{R}(\mm,I).
    	$$
    \end{Theorem}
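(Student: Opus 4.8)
The plan is to reuse the entire setup of the proof of Theorem~\ref{Thm:DaoInv}, improving only the regularity estimate for the Dao module $\mathfrak{D}_\mm(I)$. By Corollary~\ref{Cor:regD123-Dao} we may assume $\mathfrak{D}_\mm(I)\neq 0$, and it suffices to prove $\reg_{\mathcal{R}(\mm)}\mathfrak{D}_\mm(I)+1\le\reg_{\mathcal{R}(\mm)}\mathcal{R}(\mm,I)$. Using rule (v) for the ideal $\mathcal{R}(\mm,I)$ of the standard graded Noetherian ring $\mathcal{R}(\mm)$, this is equivalent to $\reg_{\mathcal{R}(\mm)}\mathfrak{D}_\mm(I)\le\reg_{\mathcal{R}(\mm)}(\mathcal{R}(\mm)/\mathcal{R}(\mm,I))$, and since $\mathfrak{D}_\mm(I)$ has finite length, rule (iii) turns the left-hand side into the top nonzero degree of $\mathfrak{D}_\mm(I)$.

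The key step, and the only place where the hypothesis $\depth\gr_\mm(R)>0$ is used, is to identify $\mathfrak{D}_\mm(I)$ with the socle $\textup{Soc}_Q(\mathcal{R}(\mm)/\mathcal{R}(\mm,I))=(0:_{\mathcal{R}(\mm)/\mathcal{R}(\mm,I)}Q)$, where $Q=\mathcal{R}(\mm)_+$. Since $\mathcal{R}(\mm)$ is standard graded over $R$, the ideal $Q$ is generated by $\mm$ in degree one, and a degreewise computation gives $\textup{Soc}_Q(\mathcal{R}(\mm)/\mathcal{R}(\mm,I))_k=(\mm^k\cap(I\mm^{k+1}:_R\mm))/I\mm^k$, whereas $\mathfrak{D}_\mm(I)_k=(I\mm^{k+1}:_R\mm)/I\mm^k$. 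Hence the inclusion $\textup{Soc}_Q(\mathcal{R}(\mm)/\mathcal{R}(\mm,I))\subseteq\mathfrak{D}_\mm(I)$ is automatic, and equality holds exactly when $(I\mm^{k+1}:_R\mm)\subseteq\mm^k$ for all $k\ge0$.

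To verify this last inclusion I would exploit the depth hypothesis. As $K$ is infinite and $\depth\gr_\mm(R)>0$, graded prime avoidance yields a degree-one nonzerodivisor $x^*\in\gr_\mm(R)_1$; choose a lift $x\in\mm\setminus\mm^2$. A leading-form argument, relying on the Krull intersection theorem $\bigcap_j\mm^j=0$, then shows that $xa\in\mm^{k+1}$ forces $a\in\mm^k$: if a nonzero $a$ had order $n<k$, then the nonzerodivisor $x^*$ would make the leading form of $xa$ nonzero in degree $n+1\le k$, contradicting $xa\in\mm^{k+1}$. Applying this to any $a\in(I\mm^{k+1}:_R\mm)$, for which $xa\in I\mm^{k+1}\subseteq\mm^{k+1}$, gives $a\in\mm^k$ and establishes the claimed equality of modules.

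With the socle identity in hand, rule (iii), fact (iv) applied to the ideal $\mathcal{R}(\mm,I)$, and rule (v) give
\[
\reg_{\mathcal{R}(\mm)}\mathfrak{D}_\mm(I)=\max\{k:\textup{Soc}_Q(\mathcal{R}(\mm)/\mathcal{R}(\mm,I))_k\neq0\}\le\reg_{\mathcal{R}(\mm)}(\mathcal{R}(\mm)/\mathcal{R}(\mm,I))=\reg_{\mathcal{R}(\mm)}\mathcal{R}(\mm,I)-1,
\]
whence $\mathfrak{d}_3(I)=\reg_{\mathcal{R}(\mm)}\mathfrak{D}_\mm(I)+1\le\reg_{\mathcal{R}(\mm)}\mathcal{R}(\mm,I)$; the relations $\mathfrak{d}_2(I)\le\mathfrak{d}_3(I)=\mathfrak{d}_1(I)$ are quoted from \cite[Proposition~2.2]{MNQ23} as in Theorem~\ref{Thm:DaoInv}. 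I expect the socle identity, and in particular the inclusion $(I\mm^{k+1}:_R\mm)\subseteq\mm^k$, to be the main obstacle: it is the sole point genuinely requiring $\depth\gr_\mm(R)>0$, and it must be argued with leading forms in $\gr_\mm(R)$ rather than directly in $\mathcal{R}(\mm)$.
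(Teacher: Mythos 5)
Your proposal is correct and follows essentially the same route as the paper: you identify $\mathfrak{D}_\mm(I)$ with $\textup{Soc}_{\mathcal{R}(\mm)_+}(\mathcal{R}(\mm)/\mathcal{R}(\mm,I))$ via the inclusion $(I\mm^{k+1}:_R\mm)\subseteq\mm^k$, proved by the same leading-form argument with a degree-one nonzerodivisor of $\gr_\mm(R)$, and then conclude with facts (iii), (iv), (v) exactly as the paper does. Your extra care in lifting $x^*\in\gr_\mm(R)_1$ to $x\in\mm\setminus\mm^2$ and invoking prime avoidance only makes explicit what the paper leaves implicit.
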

    \begin{proof}
    	If $\mathfrak{d}_1(I)=\mathfrak{d}_3(I)=0$, there is nothing to prove. Thus, we suppose that $\mathfrak{d}_1(I)=\mathfrak{d}_3(I)>0$. In particular, $\mathfrak{d}_1(I)-1=\mathfrak{d}_3(I)-1=\max\{k:\mathfrak{D}_\mm(I)_k\ne0\}$. Let $Q=\mathcal{R}(\mm)_+=\bigoplus_{k>0}\mm^k$. Notice that
    	\begin{equation}\label{eq:computationDao}
    	\begin{aligned}
    		(\mathcal{R}(\mm,I):_{\mathcal{R}(\mm)}Q)_k\
    		&=\ \{f\in\mathcal{R}(\mm)_k=\mm^k\ :\ f\mm\subseteq I\mm^{k+1}\}\\
    		&=\ \mm^k\cap\{f\in R\ :\ f\mm\subseteq I\mm^{k+1}\}\\
    		&=\ \mm^k\cap(I\mm^{k+1}:\mm).
    	\end{aligned}
    	\end{equation}
        We claim that for all $k\ge0$,
        \begin{equation}\label{eq:claimDao}
        	\mm^k\cap(I\mm^{k+1}:\mm)\ =\ (I\mm^{k+1}:\mm).
        \end{equation}
        The assumptions that $\depth\gr_\mm(R)>0$ and that $K$ is infinite guarantee the existence of a non-zero divisor $x_0\in\gr_\mm(R)$ of degree one. Suppose for a contradiction that there exist $k\ge0$ and an element $f\in(I\mm^{k+1}:\mm)$ which does not belong to $\mm^k$. Then $f\in\mm^i\setminus\mm^{i+1}$ for a unique $i<k$. Thus $\overline{f}=f+\mm^{i+1}\in\gr_\mm(R)_i$ is a non-zero element. Then $\overline{x_0}\overline{f}=x_0f+\mm^{i+2}\in\gr_\mm(I)_{i+1}$ is non-zero as well. Thus $x_0f\notin\mm^{i+2}$ and since $i<k$, then $x_0f\notin\mm^{k+1}$ as well. This is a contradiction, because $x_0\in\mm$ and $f\in(I\mm^{k+1}:\mm)$, thus $x_0f\in I\mm^{k+1}\subseteq\mm^{k+1}$. It follows that $(I\mm^{k+1}:\mm)\subseteq\mm^k$.
        
        By our claim (\ref{eq:claimDao}) and the computation (\ref{eq:computationDao}) we obtain
        $$
        \textup{Soc}_{Q}(\mathcal{R}(\mm)/\mathcal{R}(\mm,I))\ =\ \bigoplus_{k\ge0}\frac{\mm^k\cap(I\mm^{k+1}:\mm)}{I\mm^k}\ =\ \bigoplus_{k\ge0}\frac{(I\mm^{k+1}:\mm)}{I\mm^k}\ =\ \mathfrak{D}_\mm(I).
        $$
        Now, let ${\bf y}:y_1,\dots,y_n$ be a minimal homogeneous system of generators of $\mathcal{R}(\mm)_+$. By fact (iv) and the previous computation, we obtain that
        $$
        	H_{n}({\bf y};\mathcal{R}(\mm)/\mathcal{R}(\mm,I))\ \cong\ e_1\wedge\cdots\wedge e_n\ \mathfrak{D}_\mm(I),
        $$
        and $\mathfrak{d}_1(I)-1=\mathfrak{d}_3(I)-1=\max\{k: \mathfrak{D}_\mm(I)_k\ne0\}\le\reg_{\mathcal{R}(\mm)} \mathcal{R}(\mm)/\mathcal{R}(\mm,I)$. By (v) we have $\reg_{\mathcal{R}(\mm)} \mathcal{R}(\mm)/\mathcal{R}(\mm,I)=\reg_{\mathcal{R}(\mm)}\mathcal{R}(\mm,I)-1$. The assertion follows.
    \end{proof}

    \begin{Corollary}\label{Cor:reg-loc-Dao}
    	Under the same assumptions and notation of Theorem \ref{Thm:DaoInv}, if we suppose in addition that $R$ is regular, then
    	$$
    	\mathfrak{d}_2(I)\le \mathfrak{d}_3(I)=\mathfrak{d}_1(I)\ \le\ \reg_{\mathcal{R}(\mm)}\mathcal{R}(\mm,I).
    	$$
    \end{Corollary}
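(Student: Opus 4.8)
The plan is to deduce this corollary directly from Theorem \ref{Thm:depth-gr-Dao} by checking its single additional hypothesis, namely that $\depth\gr_\mm(R)>0$. Once that inequality is verified, the asserted bound $\mathfrak{d}_2(I)\le\mathfrak{d}_3(I)=\mathfrak{d}_1(I)\le\reg_{\mathcal{R}(\mm)}\mathcal{R}(\mm,I)$ is exactly the conclusion of Theorem \ref{Thm:depth-gr-Dao}, so no further work on the Dao numbers themselves is needed.

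First I would recall the structural fact that underlies everything: when $(R,\mm,K)$ is regular, the associated graded ring $\gr_\mm(R)$ is isomorphic to a polynomial ring $K[x_1,\dots,x_d]$ over the residue field $K$, where $d=\dim R$. In the local case this is the classical characterization of regularity in terms of the associated graded ring; in the standard graded case $R$ is itself a polynomial ring and $\gr_\mm(R)\cong R$. In either situation $\gr_\mm(R)$ is a polynomial ring over the field $K$.

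Next I would use the standing hypothesis $\depth R>0$, which forces $d=\dim R\ge\depth R\ge 1$. A polynomial ring in $d\ge 1$ variables over a field is Cohen--Macaulay of dimension $d$, so that $\depth\gr_\mm(R)=\dim\gr_\mm(R)=d\ge 1>0$. This is precisely the extra hypothesis required by Theorem \ref{Thm:depth-gr-Dao}, and applying that theorem yields the claim at once.

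I do not expect any genuine obstacle here: the entire content of the corollary is the observation that regularity of $R$ upgrades to positive depth of $\gr_\mm(R)$, after which the stronger bound is inherited from Theorem \ref{Thm:depth-gr-Dao}. The only point requiring a word of care is the graded case, where one should be explicit that ``$R$ regular'' means $R$ is a polynomial ring, so that the isomorphism $\gr_\mm(R)\cong R$ makes the positivity of depth transparent.
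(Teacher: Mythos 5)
Your proposal is correct and follows exactly the paper's own argument: regularity gives $\gr_\mm(R)\cong K[x_1,\dots,x_n]$ with $n=\dim R\ge\depth R>0$, so $\depth\gr_\mm(R)=n>0$ and Theorem \ref{Thm:depth-gr-Dao} applies. No gaps; the extra remark about the graded case is a fine clarification but not a departure from the paper's route.
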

    \begin{proof}
    	Since $R$ is regular, we have $\gr_\mm(R)\cong K[x_1,\dots,x_n]$, where $n=\dim R$ (see \cite[Proposition 2.2.5]{BH} and \cite[2.2.25(c)]{BH}). Hence $\depth\gr_\mm(R)=n\ge\depth R>0$ and the assertion follows from Theorem \ref{Thm:depth-gr-Dao}.
    \end{proof}

    In particular, the proof of Theorem \ref{Thm:depth-gr-Dao} and Corollary \ref{Cor:reg-loc-Dao} show that
    \begin{Corollary}\label{Cor:Dao-m-Soc}
    	Under the same assumptions and notation of Theorem \ref{Thm:DaoInv}, if we suppose in addition that $\depth\gr_\mm(R)>0$ or that $R$ is regular, then
    	$$
    	\textup{Soc}_{\mathcal{R}(\mm)_+}(\mathcal{R}(\mm)/\mathcal{R}(\mm,I))\ =\ \mathfrak{D}_\mm(I).
    	$$
    \end{Corollary}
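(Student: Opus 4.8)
The plan is to observe that the required identity has already been extracted, degree by degree, during the proof of Theorem \ref{Thm:depth-gr-Dao}, so the work reduces to assembling those computations. First I would recall the graded description of the colon module. Writing $Q=\mathcal{R}(\mm)_+$, the computation (\ref{eq:computationDao}) gives, for every $k\ge0$,
$$
(\mathcal{R}(\mm,I):_{\mathcal{R}(\mm)}Q)_k\ =\ \mm^k\cap(I\mm^{k+1}:\mm),
$$
so that the $k$th graded component of $\textup{Soc}_Q(\mathcal{R}(\mm)/\mathcal{R}(\mm,I))=(\mathcal{R}(\mm,I):_{\mathcal{R}(\mm)}Q)/\mathcal{R}(\mm,I)$ is precisely $(\mm^k\cap(I\mm^{k+1}:\mm))/I\mm^k$.

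The decisive ingredient is then the containment $(I\mm^{k+1}:\mm)\subseteq\mm^k$ for all $k$, i.e.\ claim (\ref{eq:claimDao}). Under the standing hypothesis $\depth\gr_\mm(R)>0$ together with $K$ infinite, this is exactly what was established in the proof of Theorem \ref{Thm:depth-gr-Dao}: one picks a degree-one nonzerodivisor $x_0$ of $\gr_\mm(R)$, and if some $f\in(I\mm^{k+1}:\mm)$ lay in $\mm^i\setminus\mm^{i+1}$ with $i<k$, then $\overline{x_0}\,\overline{f}\ne0$ in $\gr_\mm(R)$ would force $x_0f\notin\mm^{k+1}$, contradicting $x_0f\in I\mm^{k+1}$. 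Granting this, $\mm^k\cap(I\mm^{k+1}:\mm)=(I\mm^{k+1}:\mm)$, so the socle in degree $k$ coincides with $(I\mm^{k+1}:\mm)/I\mm^k=\mathfrak{D}_\mm(I)_k$, and summing over $k$ yields
$$
\textup{Soc}_{\mathcal{R}(\mm)_+}(\mathcal{R}(\mm)/\mathcal{R}(\mm,I))\ =\ \mathfrak{D}_\mm(I).
$$

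For the regular case I would simply reduce to the previous one: as recorded in the proof of Corollary \ref{Cor:reg-loc-Dao}, regularity of $R$ gives $\gr_\mm(R)\cong K[x_1,\dots,x_n]$ with $n=\dim R$, whence $\depth\gr_\mm(R)=n\ge\depth R>0$, placing us back in the situation $\depth\gr_\mm(R)>0$. The only nontrivial obstacle is the containment (\ref{eq:claimDao}), which is exactly where the depth hypothesis on the associated graded ring is consumed; the rest is bookkeeping with the graded pieces of the colon module, and no new argument beyond Theorem \ref{Thm:depth-gr-Dao} is needed.
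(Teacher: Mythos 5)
Your proposal is correct and follows essentially the same route as the paper: the paper's proof of Corollary \ref{Cor:Dao-m-Soc} is exactly the observation that the computation (\ref{eq:computationDao}), the containment (\ref{eq:claimDao}) obtained from a degree-one non-zerodivisor of $\gr_\mm(R)$, and the reduction of the regular case to $\depth\gr_\mm(R)>0$ via $\gr_\mm(R)\cong K[x_1,\dots,x_n]$ already yield the identity $\textup{Soc}_{\mathcal{R}(\mm)_+}(\mathcal{R}(\mm)/\mathcal{R}(\mm,I))=\mathfrak{D}_\mm(I)$. Your degree-by-degree assembly of these facts matches the paper's argument.
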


    An ideal $J$ is called a \textit{reduction} of $I$ if $JI^k=I^{k+1}$ for some $k\ge0$. Let $J$ be a reduction of $I$. The \textit{reduction number of $I$ with respect to $J$} is defined as the integer
    $$
    \textup{r}_J(I)\ =\ \min\{k\ge0\ :\ JI^k=I^{k+1}\}.
    $$
    
    Let $M$ be a finitely generated graded $R$-module. Then $\reg_RM_{\ge 1}-1\le\reg_RM$. Indeed the short exact sequence $0\rightarrow M_{\ge 1}\rightarrow M\rightarrow M/M_{\ge1}\rightarrow0$ and rule (ii) imply that $\reg_R M_{\ge1}-1\le\max\{\reg_R M-1,\reg_R M/M_{\ge1}\}$. By rule (iii), $\reg_R M/M_{\ge1}=0$. Hence $\reg_R M_{\ge1}-1\le\reg_R M$. We use this fact in the proof of the next result.
    
    \begin{Proposition}\label{Prop:BoundRegReduction-Dao}
    	Let $(R,\mm,K)$ be either a Noetherian local ring or a standard graded Noetherian $K$-algebra with unique homogeneous maximal ideal $\mm$. Let $I\subset R$ be a reduction of $\mm$, which we assume to be homogeneous if $R$ is a $K$-algebra. Then,
    	$$
    	\reg_{\mathcal{R}(\mm)}\mathcal{R}(\mm,I)\ \le\ \reg_{\mathcal{R}(\mm)}\mathcal{R}(\mm).
    	$$
    \end{Proposition}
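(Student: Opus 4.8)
The plan is to realize $\mathcal{R}(\mm,I)$ as a submodule of $\mm\mathcal{R}(\mm)$ with finite‑length cokernel, and to reduce the whole statement to the single inequality $\textup{r}_I(\mm)\le\reg_{\mathcal{R}(\mm)}\mathcal{R}(\mm)$, where $\textup{r}_I(\mm)$ is the reduction number. First I would record the structural identity $\mm\mathcal{R}(\mm)=\mathcal{R}(\mm)_{\ge1}(1)$, since in degree $k$ both sides equal $\mm^{k+1}$. As $I\subseteq\mm$ (a reduction of $\mm$ must lie in $\mm$, because $\depth R>0$ forbids $\mm^k=0$), we get graded inclusions $\mathcal{R}(\mm,I)\subseteq\mm\mathcal{R}(\mm)\subseteq\mathcal{R}(\mm)$. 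Combining the shift rule (i) with the truncation fact $\reg_R M_{\ge1}-1\le\reg_R M$ proved just above gives
$$\reg_{\mathcal{R}(\mm)}\mm\mathcal{R}(\mm)=\reg_{\mathcal{R}(\mm)}\mathcal{R}(\mm)_{\ge1}-1\le\reg_{\mathcal{R}(\mm)}\mathcal{R}(\mm),$$
which already settles the case $I=\mm$ (hence the regular case behind Corollary \ref{Cor:MNQ}). Setting $r=\textup{r}_I(\mm)$, we have $I\mm^k=\mm^{k+1}$ for $k\ge r$, so the cokernel $C=\mm\mathcal{R}(\mm)/\mathcal{R}(\mm,I)$, with $C_k=\mm^{k+1}/I\mm^k$, vanishes in degrees $\ge r$; thus $C$ has finite length and $\reg_{\mathcal{R}(\mm)}C=r-1$ by rule (iii). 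Applying rule (ii) to $0\to\mathcal{R}(\mm,I)\to\mm\mathcal{R}(\mm)\to C\to0$ yields $\reg_{\mathcal{R}(\mm)}\mathcal{R}(\mm,I)\le\max\{\reg_{\mathcal{R}(\mm)}\mathcal{R}(\mm),\,r\}$, so everything reduces to showing $r\le\reg_{\mathcal{R}(\mm)}\mathcal{R}(\mm)$.

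This inequality is the main obstacle, as it carries the genuinely nonformal content: for $R=K[[t^3,t^4,t^5]]$ and $I=(t^3)$ one has $r=1=\reg_{\mathcal{R}(\mm)}\mathcal{R}(\mm)$, so the cokernel term is not slack. To handle it I would pass to $A=\gr_\mm(R)$. Writing $\bar a_i\in A_1$ for the initial forms of a generating set of $I$ and $V=\sum K\bar a_i\subseteq A_1$, Nakayama shows $(A/VA)_j=0\iff I\mm^{j-1}=\mm^j$, so $A/VA$ is Artinian with top degree exactly $r$. Since $I$ is a reduction, $VA$ is $A_+$‑primary, whence $V$ lies in no associated prime of $A$ other than $A_+$; as $K$ is infinite, a general sequence $\bar\ell_1,\dots,\bar\ell_d\in V$ (with $d=\dim A$) is then a filter‑regular linear system of parameters satisfying $(\bar\ell_\bullet)\subseteq VA$.

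I would then invoke two facts, both provable from rules (i)–(iii): modding out a filter‑regular linear form does not raise regularity, because the relevant colon module is a finite‑length submodule of $A$ and so is concentrated in degrees $\le\reg_A A$, giving $\reg_A(A/(\bar\ell_\bullet)A)\le\reg_A A$; and since $A/(\bar\ell_\bullet)A$ is Artinian, its regularity equals its top degree by rule (iii). As $A/VA$ is a quotient of $A/(\bar\ell_\bullet)A$, its top degree $r$ is at most that of $A/(\bar\ell_\bullet)A$, so $r\le\reg_A A$. Finally the natural sequence $0\to\mathcal{R}(\mm)(1)\to\mathcal{R}(\mm)\to\gr_\mm(R)\to0$ together with rules (i) and (ii) gives $\reg_{\mathcal{R}(\mm)}\gr_\mm(R)\le\reg_{\mathcal{R}(\mm)}\mathcal{R}(\mm)$, and since the $\mathcal{R}(\mm)$‑ and $A$‑regularities of $A$ coincide, we obtain
$$r\le\reg_A A=\reg_{\mathcal{R}(\mm)}\gr_\mm(R)\le\reg_{\mathcal{R}(\mm)}\mathcal{R}(\mm),$$
which closes the argument. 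The delicate point throughout is the filter‑regularity step: it is precisely where the hypothesis that $K$ is infinite is used, via the fact that a reduction meets $A_1$ outside every positive‑dimensional associated prime of $A$.
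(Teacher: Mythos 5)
Your argument is correct, and its skeleton coincides with the paper's: the same identification $\mm\mathcal{R}(\mm)=\mathcal{R}(\mm)_{\ge1}(1)$, the same short exact sequence $0\to\mathcal{R}(\mm,I)\to\mathcal{R}(\mm)_{\ge1}(1)\to C\to0$ with $C_k=\mm^{k+1}/I\mm^k$ and $\reg_{\mathcal{R}(\mm)}C=\textup{r}_I(\mm)-1$, the same use of the truncation fact and rules (i)--(iii), and the same reduction to the single inequality $\textup{r}_I(\mm)\le\reg_{\mathcal{R}(\mm)}\mathcal{R}(\mm)$. The genuine difference is how that inequality is obtained: the paper simply cites \cite[Theorem 4.9]{NVTrung-1998}, while you re-prove it, passing to $A=\gr_\mm(R)$, noting that $A/VA$ is Artinian with top degree exactly $\textup{r}_I(\mm)$ (via Nakayama applied to $\mm^j/I\mm^{j-1}$), choosing a filter-regular linear system of parameters inside $V$ (legitimate since $K$ is infinite --- a standing assumption of the paper, even though the Proposition does not restate it), and combining the fact that quotients by filter-regular linear forms do not raise regularity with $\reg_AA=\reg_{\mathcal{R}(\mm)}A\le\reg_{\mathcal{R}(\mm)}\mathcal{R}(\mm)$. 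What this buys is self-containedness: you prove, by what is essentially the standard argument, the special case of Trung's theorem that the paper outsources. The cost, beyond length, is that your filter-regular step is not purely formal from rules (i)--(iii) as you suggest: it also needs the local-cohomology description of regularity at $i=0$ (to see that the finite-length colon module $(0:_A\ell)$ sits in degrees $\le\reg_AA$) and the base-independence of regularity for the identification $\reg_AA=\reg_{\mathcal{R}(\mm)}A$; both are standard and compatible with the paper's definitions, but should be stated as inputs. One typo to fix: your final exact sequence must read $0\to\mathcal{R}(\mm)_{\ge1}(1)\to\mathcal{R}(\mm)\to\gr_\mm(R)\to0$; the module $\mathcal{R}(\mm)(1)$ you wrote has a nonzero component in degree $-1$ and so admits no injective graded map into $\mathcal{R}(\mm)$.
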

    \begin{proof}
    	For all $k\ge0$, we have the short exact sequence
    	$$
    	0\rightarrow I\mm^k\rightarrow\mm^{k+1}\rightarrow\frac{\mm^{k+1}}{I\mm^k}\rightarrow0.
    	$$
    	Taking the direct sum of these short exact sequences for all $k\ge0$, we obtain the short exact sequence $0\rightarrow\mathcal{R}(\mm,I)\rightarrow\mathcal{R}(\mm)_{\ge1}(1)\rightarrow\mathcal{R}(\mm)_{\ge1}(1)/\mathcal{R}(\mm,I)\rightarrow0$ of finitely generated graded $\mathcal{R}(\mm)$-modules. Notice that
    	$$
    	\frac{\mathcal{R}(\mm)_{\ge1}(1)}{\mathcal{R}(\mm,I)}\ \ =\ \ \bigoplus_{k\ge0}\frac{\mm^{k+1}}{I\mm^k}\ =\ \bigoplus_{k=0}^{\textup{r}_I(\mm)-1}\frac{\mm^{k+1}}{I\mm^k}.
    	$$
    	Hence, from fact (iii) we have $\reg_{\mathcal{R}(\mm)}\mathcal{R}(\mm)_{\ge1}(1)/\mathcal{R}(\mm,I)=\max\{\textup{r}_I(\mm)-1,0\}$. Now, if $\textup{r}_I(\mm)=0$, then $\mathcal{R}(\mm,I)\cong\mathcal{R}(\mm)_{\ge1}(1)$. Thus, by rule (i) we have that
    	$$
    	\reg_{\mathcal{R}(\mm)}\mathcal{R}(\mm,I)\ =\ \reg_{\mathcal{R}(\mm)}\mathcal{R}(\mm)_{\ge1}(1)\ =\ \reg_{\mathcal{R}(\mm)}\mathcal{R}(\mm)_{\ge1}-1\ \le\ \reg_{\mathcal{R}(\mm)}\mathcal{R}(\mm).
    	$$
    	
    	Suppose now that $\textup{r}_I(\mm)>0$. Then $\reg_{\mathcal{R}(\mm)}\mathcal{R}(\mm)_{\ge1}(1)/\mathcal{R}(\mm,I)=\textup{r}_I(\mm)-1$. By \cite[Theorem 4.9]{NVTrung-1998} we have $\textup{r}_I(\mm)\le\reg_{\mathcal{R}(\mm)}\mathcal{R}(\mm)$. Applying rule (ii) to the above sequence we get
    	\begin{align*}
    		\reg_{\mathcal{R}(\mm)}\mathcal{R}(\mm,I)\ &\le\ \max\{\reg_{\mathcal{R}(\mm)}\mathcal{R}(\mm)_{\ge1}(1),\,\reg_{\mathcal{R}(\mm)}\mathcal{R}(\mm)_{\ge1}(1)/\mathcal{R}(\mm,I)+1\}\\
    		&=\ \max\{\reg_{\mathcal{R}(\mm)}\mathcal{R}(\mm)_{\ge1}-1,\,\textup{r}_I(\mm)\}\\
    		&\le\ \reg_{\mathcal{R}(\mm)}\mathcal{R}(\mm),
    	\end{align*}
        as desired.
    \end{proof}

    Using the theory developed thus far we can reprove \cite[Corollary 3.11]{MNQ23}.
    \begin{Corollary}\label{Cor:MNQ}
    	Under the same assumptions and notation of Theorem \ref{Thm:DaoInv}, if we suppose in addition that $R$ is a regular ring, and that $I$ is a reduction of $\mm$, then $$\mathfrak{d}_1(I)=\mathfrak{d}_2(I)=\mathfrak{d}_3(I)=0.$$
    \end{Corollary}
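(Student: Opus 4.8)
The plan is to combine the two bounds already in hand and reduce the statement to a single regularity computation for the Rees algebra $\mathcal{R}(\mm)$ itself. Since $R$ is regular, Corollary~\ref{Cor:reg-loc-Dao} gives $\mathfrak{d}_2(I)\le\mathfrak{d}_3(I)=\mathfrak{d}_1(I)\le\reg_{\mathcal{R}(\mm)}\mathcal{R}(\mm,I)$, and since $I$ is a reduction of $\mm$, Proposition~\ref{Prop:BoundRegReduction-Dao} gives $\reg_{\mathcal{R}(\mm)}\mathcal{R}(\mm,I)\le\reg_{\mathcal{R}(\mm)}\mathcal{R}(\mm)$. The module $\mathcal{R}(\mm,I)=\bigoplus_{k\ge0}I\mm^k$ is generated in degree $0$, so $\reg_{\mathcal{R}(\mm)}\mathcal{R}(\mm,I)\ge0$; as the Dao numbers are nonnegative integers, it therefore suffices to prove that $\reg_{\mathcal{R}(\mm)}\mathcal{R}(\mm)=0$, for then the whole chain collapses to $0$ and $\mathfrak{d}_1(I)=\mathfrak{d}_2(I)=\mathfrak{d}_3(I)=0$.

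The step I expect to be the main obstacle is precisely $\reg_{\mathcal{R}(\mm)}\mathcal{R}(\mm)=0$. Here I would use that $R$ regular means $\mm$ is generated by a regular sequence $x_1,\dots,x_n$ with $n=\dim R$, so $\mm$ is of linear type and $\mathcal{R}(\mm)\cong\textup{Sym}_R(\mm)\cong P/J$, where $P=R[T_1,\dots,T_n]$ is standard graded with $\deg T_i=1$ and $\deg R=0$, and $J=I_2(M)$ is the ideal of $2\times2$ minors of $M=\left(\begin{smallmatrix}x_1&\cdots&x_n\\ T_1&\cdots&T_n\end{smallmatrix}\right)$. The minors $x_iT_j-x_jT_i$ are homogeneous of degree $1$ and, because the $x_i$ form a regular sequence, $\textup{grade}(J)=\textup{ht}(J)=n-1$, so the Eagon--Northcott complex of $M$ is a graded $P$-free resolution of $\mathcal{R}(\mm)$. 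Verifying that this resolution is linear --- equivalently, that $\textup{H}^i_Q(\mathcal{R}(\mm))_j=0$ whenever $i+j>0$, where $Q=\mathcal{R}(\mm)_+$ --- is the delicate point; granting it, $\reg_P\mathcal{R}(\mm)=0$, and since Castelnuovo--Mumford regularity with respect to the common irrelevant ideal $Q$ is not changed by the surjection $P\twoheadrightarrow\mathcal{R}(\mm)$ (both are computed via $\textup{H}^\bullet_Q$), we obtain $\reg_{\mathcal{R}(\mm)}\mathcal{R}(\mm)=0$. A useful sanity check and partial reduction is the short exact sequence $0\to\mathcal{R}(\mm)_{\ge1}(1)\to\mathcal{R}(\mm)\to\gr_\mm(R)\to0$ together with $\reg_{\mathcal{R}(\mm)}\gr_\mm(R)=\reg_{\gr_\mm(R)}\gr_\mm(R)=0$, since $\gr_\mm(R)\cong K[x_1,\dots,x_n]$.

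I would also record a cleaner, self-contained alternative that sidesteps the regularity computation entirely. In a regular local (or graded) ring with infinite residue field the analytic spread of $\mm$ equals $\dim R=n$, which coincides with $\mu(\mm)=\dim_K\mm/\mm^2$ precisely because $R$ is regular; hence $\mm$ admits no proper reduction and necessarily $I=\mm$. Then $\mathcal{R}(\mm,\mm)=\mathcal{R}(\mm)_{\ge1}(1)$ and $\mathcal{R}(\mm)/\mathcal{R}(\mm,\mm)\cong\gr_\mm(R)\cong K[x_1,\dots,x_n]$. As $\depth\gr_\mm(R)=n>0$, Corollary~\ref{Cor:Dao-m-Soc} identifies the Dao module as $\mathfrak{D}_\mm(\mm)=\textup{Soc}_{\mathcal{R}(\mm)_+}(\gr_\mm(R))=(0:_{\gr_\mm(R)}\gr_\mm(R)_+)$, which vanishes because $\gr_\mm(R)$ is a positive-dimensional domain. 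Thus $\mathfrak{D}_\mm(I)=0$ and Corollary~\ref{Cor:regD123-Dao} yields $\mathfrak{d}_1(I)=\mathfrak{d}_2(I)=\mathfrak{d}_3(I)=0$, confirming the result along the lines suggested by the reduction-theoretic setup.
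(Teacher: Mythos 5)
Your first route is, structurally, exactly the paper's proof: combine Corollary \ref{Cor:reg-loc-Dao} with Proposition \ref{Prop:BoundRegReduction-Dao} and reduce everything to showing $\reg_{\mathcal{R}(\mm)}\mathcal{R}(\mm)=0$. The step you flag as ``the delicate point'' and then grant is precisely the step the paper does not prove either: since $R$ regular makes $\mm$ generated by a regular sequence, the paper simply invokes \cite[Corollary 5.2]{NVTrung-1998} to get $\reg_{\mathcal{R}(\mm)}\mathcal{R}(\mm)=0$. So as written your route 1 has an acknowledged gap exactly where the paper inserts a citation; for the record, your Eagon--Northcott sketch can be completed (with its natural internal grading each term of the complex carries a twist by $(\wedge^2F)^*$, so the $p$-th term is generated in degrees between $1$ and $p$, giving $\reg_P(P/J)=0$, and regularity is unchanged along $P\rightarrow\mathcal{R}(\mm)$ because both regularities are computed from local cohomology supported in $Q$), but that verification is genuinely more work than quoting Trung.

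Your second route is a correct proof and genuinely different from the paper's. Its key observation --- that in a regular ring $\ell(\mm)=\mu(\mm)=n$ forces every reduction of $\mm$ to be $\mm$ itself --- is right: any reduction contains a minimal reduction, which by Northcott--Rees theory has $\ell(\mm)=n$ generators whose images in $\mm/\mm^2$ must be linearly independent, so Nakayama returns $\mm$. After that, $\mathcal{R}(\mm)/\mathcal{R}(\mm,\mm)\cong\gr_\mm(R)\cong K[x_1,\dots,x_n]$ has zero socle with respect to $\mathcal{R}(\mm)_+$, so Corollary \ref{Cor:Dao-m-Soc} gives $\mathfrak{D}_\mm(I)=0$ and Corollary \ref{Cor:regD123-Dao} finishes. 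What this buys: it avoids both Trung's theorem and Proposition \ref{Prop:BoundRegReduction-Dao}, and it exposes the somewhat deflating fact that the hypotheses leave only the case $I=\mm$, so the corollary is really the statement that all powers of $\mm$ are weakly $\mm$-full in a regular ring. What it costs: it imports Northcott--Rees reduction theory (existence of minimal reductions and $\mu=\ell$ for them, which is where the infinite residue field enters), machinery the paper never develops, whereas the paper's argument stays entirely inside the regularity framework it has already built.
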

    \begin{proof}
    	Under the assumption that $R$ is regular, $\mm$ is generated by a regular sequence. Therefore \cite[Corollary 5.2]{NVTrung-1998} implies that $\reg_{\mathcal{R}(\mm)}\mathcal{R}(\mm)=0$. By Proposition \ref{Prop:BoundRegReduction-Dao} and Corollary \ref{Cor:reg-loc-Dao} we conclude that $\mathfrak{d}_1(I)=\mathfrak{d}_2(I)=\mathfrak{d}_3(I)=0$.
    \end{proof}

    We conclude this section by providing a lower bound for $\mathfrak{d}_1(I)=\mathfrak{d}_3(I)$.
    \begin{Proposition}
    	Under the same assumptions and notation of Theorem \ref{Thm:DaoInv}, we have
    	$$
    	\omega(\mathcal{R}(\mm,I):_{\mathcal{R}(R)}\mm)-1\ \le\ \mathfrak{d}_3(I)=\mathfrak{d}_1(I)
    	$$
    \end{Proposition}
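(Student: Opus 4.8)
The plan is to prove the slightly sharper statement $\omega(N)\le\mathfrak{d}_3(I)+1$, where I abbreviate $N=\mathcal{R}(\mm,I):_{\mathcal{R}(R)}\mm$ and $Q=\mathcal{R}(\mm)_+$. First I would record the graded pieces of $N$. Exactly as in the computation (\ref{eq:colonDao}) carried out for the truncation $\mathcal{R}(\mm,I)_{\ge1}$, an element $f$ sitting in degree $k$ lies in $N$ precisely when $f\mm\subseteq I\mm^{k}$, and this single condition already forces $f\mm^{j+1}=(f\mm)\mm^j\subseteq I\mm^{k+j}$ for every $j\ge0$; hence
$$
N_k\ =\ (I\mm^{k}:\mm)\qquad\textup{for all }k\ge0.
$$
Since $\omega(N)=\max\{j:(N/QN)_j\ne0\}$ is the top degree of a minimal homogeneous generator of $N$, my goal becomes to show that $N$ is generated in degrees at most $\mathfrak{d}_3(I)+1$.

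Set $d=\mathfrak{d}_3(I)=\mathfrak{d}_1(I)$. By the very definition of $\mathfrak{d}_3(I)$, the ideal $I\mm^k$ is weakly $\mm$-full for every $k\ge d$, that is $(I\mm^{k+1}:\mm)=I\mm^k$ for $k\ge d$; reindexing, this reads
$$
N_j\ =\ (I\mm^{j}:\mm)\ =\ I\mm^{j-1}\qquad\textup{for all }j\ge d+1.
$$
Next I would isolate the elementary identity $\mm\,(I\mm^{j-1}:\mm)=I\mm^{j-1}$, valid for all $j\ge2$: the inclusion ``$\subseteq$'' is immediate from the definition of the colon, while ``$\supseteq$'' follows from $I\mm^{j-1}=\mm\,(I\mm^{j-2})\subseteq\mm\,(I\mm^{j-1}:\mm)$, using $I\mm^{j-2}\subseteq(I\mm^{j-1}:\mm)$. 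Because $\mathcal{R}(\mm)$ is standard graded with $\mathcal{R}(\mm)_1=\mm$, we have $(QN)_j=\mm N_{j-1}$. Combining the two displays, for every $j\ge d+2$ (so that $j\ge d+1$ and $j\ge2$) I obtain
$$
(QN)_j\ =\ \mm N_{j-1}\ =\ \mm\,(I\mm^{j-1}:\mm)\ =\ I\mm^{j-1}\ =\ N_j.
$$

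This shows $(N/QN)_j=0$ for all $j\ge d+2$, so $N$ is generated by its graded components in degrees $\le d+1$. Since each $N_k=(I\mm^{k}:\mm)$ with $0\le k\le d+1$ is a finitely generated $R$-module (an ideal of the Noetherian ring $R$), this simultaneously reproves that $N$ is a finitely generated graded $\mathcal{R}(\mm)$-module and yields $\omega(N)\le d+1$, that is $\omega(N)-1\le\mathfrak{d}_3(I)$. I expect the only genuinely delicate point to be the bookkeeping around the grading of the colon module $N$, in particular pinning down $N_k=(I\mm^{k}:\mm)$ with the correct \emph{un-shifted} index, as opposed to the shifted module $(\mathcal{R}(\mm,I)_{\ge1}:_{\mathcal{R}(R)}\mm)(1)$ entering the Dao module; once the stabilization $N_j=I\mm^{j-1}$ for $j\ge d+1$ and the identity $\mm\,(I\mm^{j-1}:\mm)=I\mm^{j-1}$ are in place, the generation bound is immediate.
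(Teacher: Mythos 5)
Your proof is correct and takes essentially the same route as the paper: both arguments identify the graded pieces of $N=\mathcal{R}(\mm,I):_{\mathcal{R}(R)}\mm$ as $N_k=(I\mm^k:\mm)$ and then show $N_j=\mathcal{R}(\mm)_1N_{j-1}$ for all $j\ge\mathfrak{d}_3(I)+2$ via the stabilization $(I\mm^{k+1}:\mm)=I\mm^k$ for $k\ge\mathfrak{d}_3(I)$, which gives $\omega(N)\le\mathfrak{d}_3(I)+1$. The only cosmetic difference is that where you use the elementary identity $\mm(I\mm^{j-1}:\mm)=I\mm^{j-1}$ (no fullness needed), the paper applies weak $\mm$-fullness at two consecutive indices, writing $(I\mm^{k+1}:\mm)=I\mm^{k}=\mm(I\mm^{k-1})=\mm(I\mm^{k}:\mm)$.
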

    \begin{proof}
    	Let $M=\bigoplus_{k\ge0}M_k$ be a finitely generated module over a standard graded Noetherian algebra $A=\bigoplus_{k\ge0}A_k$ with homogeneous maximal ideal $\nn=(y_1,\dots,y_n)$ and $\deg y_i=1$ for all $i$. Then $M_k=\nn_1 M_{k-1}=\{\sum_iy_if_i:f_i\in M_{k-1}\}$ for all $k\gg0$.
    	
    	Now, let $k\ge\mathfrak{d}_3(I)+1$, then $I\mm^{k-1}$ and $I\mm^k$ are weakly $\mm$-full. Hence
    	$$
    	(I\mm^{k+1}:\mm)\ =\ I\mm^{k}\ =\ \mm(I\mm^{k-1})\ =\ \mm(I\mm^{k}:\mm).
    	$$
    	Thus $(\mathcal{R}(\mm,I):_{\mathcal{R}(\mm)}\mm)_{k+1}= \mathcal{R}(\mm)_1(\mathcal{R}(\mm,I):_{\mathcal{R}(\mm)}\mm)_k$
    	for all $k\ge\mathfrak{d}_3(I)+1$. This shows that $\omega(\mathcal{R}(\mm,I):_{\mathcal{R}(\mm)}\mm)\le\mathfrak{d}_3(I)+1$.
    \end{proof}

	\section{Fullness in Koszul algebras}\label{Sec:Dao2}
	
	In this section, we consider the more tame situation of a Koszul algebra and determine bounds for the Dao numbers of an ideal in such a ring. Let $K$ be a field and let $R=\bigoplus_{k\ge0}R_k$ be a standard graded $K$-algebra. That is,
	\begin{enumerate}
		\item[(i)] $R$ admits a decomposition $R=\bigoplus_{k\ge0}R_k$ as an abelian group
		\item[(ii)] $R$ is generated as an $R_0$-algebra by the finite dimensional $K$-vector space $R_1$,
		\item[(iii)] and $R_0=K$.
	\end{enumerate}
	In this case, we have the surjective homogeneous ring map
	\begin{equation}\label{eq:PresMapDao}
		\varphi\ :\ S=K[x_1,\dots,x_n]\rightarrow R
	\end{equation}
	such that $R_1$ has a $K$-basis given by elements $f_1,\dots,f_n$ of degree one, $\varphi(x_i)=f_i$ for all $i$ and $\varphi(r)=r$ for all $r\in R_0=K$. We call (\ref{eq:PresMapDao}) the \textit{canonical presentation} of $R$. Hence $R\cong S/\ker\varphi$. Let $\mm=R_+=\bigoplus_{k>0}R_k$.
	
	Notice that we may view the field $K$ as a finitely generated graded $R$-module by identifying it with $R/\mm$. Following \cite[Definition 4]{CDNR13}, we say that the algebra $R$ is \textit{Koszul} if $\reg_{R}K=0$. In this case, $\reg_R\mm=1$.
	
	Let $M=\bigoplus_{k\ge0}M_k$ be a finitely generated graded $R$-module. We denote by $M_{\langle k\rangle}$ the submodule of $M$ generated by the $K$-vector space $M_k$.
	
	Now, let $I\subset R$ be a homogeneous ideal. We say that $I$ has \textit{linear resolution} if $\alpha(I)=\omega(I)$ (that is, $I$ is generated in a single degree) and $\reg_RI=\alpha(I)$. We say that $I$ is \textit{componentwise linear} if $I_{\langle j\rangle}$ has linear resolution, for all $j$.\smallskip
	
	\begin{Lemma}\label{Lemma:lir}
		$(I\mm)_{\langle k\rangle}=I_{\langle k-1\rangle}\mm$ for all $k\ge1$.
	\end{Lemma}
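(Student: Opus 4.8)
The plan is to reduce the asserted identity to the single computation of the degree-$k$ strand $(I\mm)_k$, after which both sides become manifestly equal. First I would unwind the two notations. By definition $M_{\langle k\rangle}=R\cdot M_k$, the $R$-submodule generated by the $K$-vector space $M_k$. Applied to the right-hand side, $I_{\langle k-1\rangle}\mm=(R\,I_{k-1})\mm=I_{k-1}(R\mm)=I_{k-1}\mm$, since $R\mm=\mm$. Applied to the left-hand side, $(I\mm)_{\langle k\rangle}=R\cdot(I\mm)_k$. So the whole statement follows once I show that $(I\mm)_k=I_{k-1}R_1$, because then $(I\mm)_{\langle k\rangle}=R\,(I_{k-1}R_1)=I_{k-1}(R_1R)=I_{k-1}\mm$, using the standard-graded identity $R_1R=\bigoplus_{j\ge0}R_1R_j=\bigoplus_{j\ge0}R_{j+1}=\mm$.

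The heart of the argument is therefore the equality $(I\mm)_k=I_{k-1}R_1$. Since $\mm=R_+=\bigoplus_{i\ge1}R_i$, the degree-$k$ part of $I\mm$ is $(I\mm)_k=\sum_{i\ge1}I_{k-i}R_i$. The inclusion $I_{k-1}R_1\subseteq(I\mm)_k$ is trivial (the summand $i=1$). For the reverse inclusion I would collapse the higher summands one degree at a time. Because $R$ is standard graded we may write $R_i=R_1R_{i-1}$, and because $I$ is an ideal we have $I_{k-i}R_1\subseteq I_{k-i+1}$; combining these gives $I_{k-i}R_i=(I_{k-i}R_1)R_{i-1}\subseteq I_{k-i+1}R_{i-1}$. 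Iterating this descent from $i$ down to $1$ yields $I_{k-i}R_i\subseteq I_{k-1}R_1$ for every $i\ge1$, so $(I\mm)_k=I_{k-1}R_1$, as needed.

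I expect the descent step in the second paragraph to be the only real obstacle: one must be sure that the higher-degree generators of $\mm$ contribute nothing to $(I\mm)_k$ beyond $I_{k-1}R_1$. This is exactly the point where both hypotheses enter essentially — standard gradedness (every $R_i$ factors as $R_1R_{i-1}$, equivalently $R_i=R_1^{\,i}$) and the fact that $I$ is an honest ideal (so multiplying a graded piece of $I$ by $R_1$ lands in the next graded piece of $I$). Everything else in the proof is a routine identification of submodules generated by a single graded strand, and the restriction $k\ge1$ is used only to ensure the index $k-1$ is meaningful.
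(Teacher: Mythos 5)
Your proof is correct. The paper argues at the level of a monomial spanning set: using the canonical presentation of $R$, it writes a spanning element of $(I\mm)_k$ as $ug$ with $u\in\mm$ a monomial and $g\in I$ homogeneous, and peels off a single variable, $ug=x_j\bigl((u/x_j)g\bigr)$, so that all of $u$ except one linear factor is absorbed into $I$ in one step. You instead work coordinate-free: you reduce the lemma to the graded-strand identity $(I\mm)_k=I_{k-1}R_1$ (after which both sides equal $I_{k-1}\mm$) and prove that identity by the iterated descent $I_{k-i}R_i=(I_{k-i}R_1)R_{i-1}\subseteq I_{k-i+1}R_{i-1}$, transferring one degree at a time from the $\mm$-factor into $I$. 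The underlying mechanism is the same in both proofs, namely shifting all but one degree of the $\mm$-factor across into $I$, but your version never invokes monomials of $R$ or the presentation $S\to R$, so it applies verbatim in any standard graded algebra without needing the (true, but itself standard-gradedness-dependent) fact that $(I\mm)_k$ is spanned by products of monomials with homogeneous elements of $I$; the paper's version is shorter given that the monomial language is already set up in that section, and performs the absorption in a single factorization rather than by induction on the degree of the $\mm$-factor.
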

	\begin{proof}
		It is clear that $I_{\langle k-1\rangle}\mm$ is contained in $(I\mm)_{\langle k\rangle}$. For the opposite inclusion, observe that the $K$-vector space $(I\mm)_k$ is generated by the non-zero elements of the form $ug$ where $u\in\mm$ is a monomial and $g$ is homogeneous element of $I$ such that $\deg(ug)=k$. Therefore, it is enough to show that if $f=ug$ is as before, then $f\in I_{\langle k-1\rangle}\mm$. We may assume that $x_j$ divides $u$. Then, we can write $f=x_j((u/x_j)g)$, where $(u/x_j)g\in I$ has degree $k-1$. Thus, $f\in I_{\langle k-1\rangle}\mm$.
	\end{proof}
	
	\begin{Proposition}\label{Prop:CLm-fullDao}
		Let $R$ be a Koszul algebra with homogeneous maximal ideal $\mm$, and let $I\subset R$ be a componentwise linear ideal. We assume that $\depth R>0$ and $K$ is infinite. Then the following properties hold:
		\begin{enumerate}
			\item[\textup{(a)}] $I\mm^\ell$ is componentwise linear for any $\ell$.
			\item[\textup{(b)}] $(I\mm:\mm)=I$.
			\item[\textup{(c)}] $\mathfrak{d}_3(I)=0$.
		\end{enumerate}
	\end{Proposition}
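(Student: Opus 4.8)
The plan is to establish (a) first, deduce (b) from a degree‑by‑degree socle computation, and obtain (c) by combining the two. For (a) it suffices, by induction on $\ell$, to prove the single implication that if an ideal $J\subset R$ has a linear resolution then so does $J\mm$. Granting this, Lemma~\ref{Lemma:lir} gives $(I\mm)_{\langle k\rangle}=I_{\langle k-1\rangle}\mm$ for all $k\ge1$, and since each $I_{\langle k-1\rangle}$ has a linear resolution (as $I$ is componentwise linear), each $(I\mm)_{\langle k\rangle}$ has a linear resolution; that is, $I\mm$ is componentwise linear. Iterating this, starting from the hypothesis on $I$, yields that $I\mm^{\ell}$ is componentwise linear for every $\ell$.

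To prove the implication, suppose $J$ is generated in the single degree $d$ with $\reg_R J=d$, and consider the short exact sequence $0\to J\mm\to J\to J/J\mm\to0$. Since $J$ is generated in degree $d$, the module $J/J\mm=J/\mm J$ is concentrated in degree $d$, so $J/J\mm\cong K(-d)^{\mu}$ with $\mu=\mu(J)$; because $R$ is Koszul we have $\reg_R K=0$, and hence $\reg_R(J/J\mm)=d$ by rule (i). Rule (ii) then gives $\reg_R J\mm\le\max\{\reg_R J,\ \reg_R(J/J\mm)+1\}=d+1$. On the other hand $J\mm$ is generated in the single degree $d+1$ (here $\depth R>0$ guarantees $J\mm\neq0$), so $\alpha(J\mm)=\omega(J\mm)=d+1$ and therefore $\reg_R J\mm\ge\omega(J\mm)=d+1$. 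Combining the two bounds gives $\reg_R J\mm=d+1=\alpha(J\mm)$, i.e.\ $J\mm$ has a linear resolution. This step, which is precisely where Koszulness enters, is the crux of the whole argument.

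For (b) I would argue degree by degree. Since $R$ is standard graded, $(I\mm)_{d+1}=R_1I_d$ (any $R_jI_{d+1-j}$ with $j\ge2$ already lies in $R_1I_d$), whence $(I\mm:\mm)_d=\{f\in R_d:R_1f\subseteq R_1I_d\}$, it being enough to test $\mm f\subseteq I\mm$ against $R_1$ because $R_k=R_1^{\,k}$. Fix $d$ with $I_d\neq0$ and set $J=I_{\langle d\rangle}$, so that $J_d=I_d$ and $J_{d+1}=R_1I_d$. A direct check shows that the inclusion $I_d\subseteq(I\mm:\mm)_d$ induces an isomorphism $(I\mm:\mm)_d/I_d\cong\soc_\mm(R/J)_d$. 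As $J$ has a linear resolution generated in degree $d$, rule (v) gives $\reg_R R/J=\reg_R J-1=d-1$, and rule (iv) bounds the top nonzero degree of $\soc_\mm(R/J)$ by $\reg_R R/J=d-1<d$; thus $\soc_\mm(R/J)_d=0$ and $(I\mm:\mm)_d=I_d$. For the degrees $d$ with $I_d=0$ one instead has $(I\mm:\mm)_d=\{f\in R_d:R_1f=0\}\subseteq\soc_\mm(R)_d=H^0_\mm(R)_d=0$, using $\depth R>0$. Taking all $d$ together yields $(I\mm:\mm)=I$.

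Finally, (c) is immediate: by (a) every $I\mm^{k}$ is componentwise linear, so applying (b) to the componentwise linear ideal $I\mm^{k}$ in place of $I$ gives $(I\mm^{k+1}:\mm)=I\mm^{k}$, i.e.\ $I\mm^{k}$ is weakly $\mm$-full, for all $k\ge0$; hence $\mathfrak{d}_3(I)=0$. The main obstacles I anticipate are the linear‑resolution implication in (a), the one place the Koszul hypothesis $\reg_R K=0$ is essential, and verifying cleanly that $(I\mm:\mm)_d/I_d$ is genuinely the degree‑$d$ socle $\soc_\mm(R/I_{\langle d\rangle})_d$ rather than merely contained in it.
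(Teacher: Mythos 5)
Your proof is correct, and its skeleton --- (a) via Lemma~\ref{Lemma:lir} plus a regularity bound, (b) via a socle computation resting on facts (iv) and (v), (c) by combining the two --- is the same as the paper's; the differences lie in how the two key steps are executed. In (a), where the paper quotes \cite[Lemma 4]{CDNR13} to get $\reg_R(I_{\langle j-1\rangle}\mm)\le\reg_R I_{\langle j-1\rangle}+\reg_R\mm$, you prove the needed inequality $\reg_R(J\mm)\le \reg_R J+1$ from scratch via the exact sequence $0\to J\mm\to J\to J/\mm J\to 0$, the isomorphism $J/\mm J\cong K(-d)^{\mu}$, and $\reg_R K=0$; this makes the Koszul input explicit and the step self-contained at no real cost. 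In (b), the paper first proves $(J\mm:\mm)=J$ for an ideal $J$ with $d$-linear resolution by contradiction (a homogeneous $f\in(J\mm:\mm)\setminus J$ is forced to have degree $d-1$ as a socle element of $R/J$, yet degree $d$ by considering $R/J\mm$), and then handles a general componentwise linear $I$ through the decomposition $(I\mm:\mm)=\bigoplus_j(I_{\langle j\rangle}\mm:\mm)_j$. You instead argue uniformly one degree at a time, identifying $(I\mm:\mm)_d/I_d$ with $\textup{Soc}_\mm(R/I_{\langle d\rangle})_d$ and killing it by the bound $\max\{j:\textup{Soc}_\mm(R/I_{\langle d\rangle})_j\ne0\}\le\reg_R R/I_{\langle d\rangle}=d-1$; this merges the paper's two stages into one computation, uses only the upper bound on socle degrees (rather than the paper's stronger ``concentrated in a single degree'' assertion, whose justification itself quietly needs $\depth R>0$), and your separate treatment of the degrees with $I_d=0$ makes explicit a point the paper's chain of equalities leaves implicit. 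Two cosmetic remarks: the equality $\reg_R K(-d)^{\mu}=d$ uses that regularity commutes with finite direct sums (immediate from the local cohomology description), and in the case $I_d=0$ you should write $\soc_\mm(R)_d\subseteq H^0_\mm(R)_d$ rather than an equality --- both vanish anyway because $\depth R>0$. Part (c) is identical to the paper's.
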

	\begin{proof}
		Since $\depth R>0$, we have that $I\mm$ is non-zero.
		
		(a) This fact is well-known. We sketch the proof for completeness. It is enough to prove that $I\mm$ is componentwise linear. We must show that for each $j$ such that $(I\mm)_{\langle j\rangle}=I_{\langle j-1\rangle}\mm$ is non-zero, this ideal has linear resolution. Notice that $\reg_R(I\mm)_{\langle j\rangle}\ge j$. On the other hand, by \cite[Lemma 4]{CDNR13} and since $\reg_RI_{\langle j-1\rangle}=j-1$, we obtain that $\reg_R(I\mm)_{\langle j\rangle}\le\reg_RI_{\langle j-1\rangle}+\reg_R\mm=j$.\smallskip
		
		(b) Firstly, we assume that $I$ has linear resolution, say $d$-linear. It is clear that $I\subseteq(I\mm:\mm)$. Suppose for a contradiction that there exists $f\in(I\mm:\mm)\setminus I$. Then $\overline{f}=f+I\in(I:\mm)/I$, because $f\mm\subseteq I\mm\subseteq I$ and $f\notin I$. By fact (iv) recalled at page \pageref{fact:page}, $H_n({\bf x},R/I)\cong e_1\wedge\dots\wedge e_n\ \textup{Soc}(R/I)\ne0$ where ${\bf x}\ :\ x_1,\dots,x_n$ is a minimal homogeneous system of generators of $\m$. Since $I$ has a $d$-linear resolution, $H_n({\bf x},I)$ is concentrated in degree $n+d-1$. Thus, the $K$-vector space $\textup{Soc}(R/I)$ is generated in a single degree $d-1$, and so $\deg(f)=d-1$. By part (a), $I\mm$ has a $(d+1)$-linear resolution. Hence, the above argument shows that any non-zero homogeneous element of $(I\mm:\mm)\setminus I\mm$ has degree $d$. Since $I\mm$ is generated in degree $d+1$, $(I\mm:\mm)$ is generated in degrees $\ge d$. This contradicts the fact that $\deg(f)=d-1$ and shows that $(I\mm:\mm)=I$.
		
		Now, suppose that $I$ is generated in more than one degree. Notice that the ideal $(I\mm:\mm)$ is homogeneous as well. Thus, by the first part we have
		\begin{align*}
			(I\mm:\mm)\ &=\ \bigoplus_j(I\mm:\mm)_{j}\ =\ \bigoplus_j((I\mm)_{\langle j+1\rangle}:\mm)_{j}\\
			&=\ \bigoplus_j(I_{\langle j\rangle}\mm:\mm)_{j}\ =\ \bigoplus_j(I_{\langle j\rangle})_{j}\\
			&=\ \bigoplus_jI_{j}\ =\ I.
		\end{align*}
		
		(c) It follows by combining (a) with (b).
	\end{proof}
	
	\begin{Corollary}\label{Cor:CLIDao}
		Let $R$ be a Koszul $K$-algebra and let $I\subset R$ be a componentwise linear ideal. Assume that $K$ is infinite and $\depth R>0$. Then $\mathfrak{d}_1(I)=\mathfrak{d}_2(I)=\mathfrak{d}_3(I)=0$. In particular, $I$ is $\mm$-full, full and weakly $\mm$-full.
	\end{Corollary}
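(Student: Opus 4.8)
The plan is to deduce this corollary almost formally from Proposition \ref{Prop:CLm-fullDao}, which has already carried out the substantive work. That proposition shows, for any componentwise linear ideal $I$ in a Koszul algebra $R$ (under the standing hypotheses $K$ infinite and $\depth R>0$), that $I\mm^\ell$ is componentwise linear for every $\ell$, that $(I\mm:\mm)=I$, and consequently in part (c) that $\mathfrak{d}_3(I)=0$. So the only thing left is to propagate this single vanishing to the other two Dao numbers and then to extract the ``in particular'' clause.

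First I would invoke the comparison $\mathfrak{d}_2(I)\le\mathfrak{d}_3(I)=\mathfrak{d}_1(I)$ from \cite[Proposition 2.2]{MNQ23}, which holds precisely because $K$ is infinite and $\depth R>0$ (this is the same ingredient already used at the end of the proof of Theorem \ref{Thm:DaoInv}). Feeding in $\mathfrak{d}_3(I)=0$ from Proposition \ref{Prop:CLm-fullDao}(c), the equality $\mathfrak{d}_3(I)=\mathfrak{d}_1(I)$ gives $\mathfrak{d}_1(I)=0$, while $0\le\mathfrak{d}_2(I)\le\mathfrak{d}_3(I)=0$ forces $\mathfrak{d}_2(I)=0$. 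This yields all three equalities $\mathfrak{d}_1(I)=\mathfrak{d}_2(I)=\mathfrak{d}_3(I)=0$ at once.

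For the final assertion I would simply unwind the definitions of the Dao numbers at $k=0$. Since $\mathfrak{d}_1(I)=0$, the ideal $I\mm^k$ is $\mm$-full for all $k\ge0$, and taking $k=0$ shows that $I=I\mm^0$ is $\mm$-full. In the same way $\mathfrak{d}_2(I)=0$ gives that $I$ is full, and $\mathfrak{d}_3(I)=0$ gives that $I$ is weakly $\mm$-full. (Alternatively, weak $\mm$-fullness of $I$ is exactly the statement $(I\mm:\mm)=I$ already recorded in Proposition \ref{Prop:CLm-fullDao}(b).)

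I do not expect any genuine obstacle at this stage: all the real difficulty was absorbed into Proposition \ref{Prop:CLm-fullDao}(b), whose proof used the Koszul hypothesis through the socle-degree computation via the top Koszul homology $H_n(\mathbf{x};R/I)$ and the fact that a $d$-linear ideal has socle concentrated in degree $d-1$. Granting that proposition, the corollary is a purely formal consequence of the chain $\mathfrak{d}_2\le\mathfrak{d}_3=\mathfrak{d}_1$ together with evaluation of the defining minima at $k=0$.
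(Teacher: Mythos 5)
Your proposal is correct and follows the paper's own proof essentially verbatim: both deduce $\mathfrak{d}_3(I)=0$ from Proposition \ref{Prop:CLm-fullDao}(c), then apply \cite[Proposition 2.2]{MNQ23} (the chain $\mathfrak{d}_2(I)\le\mathfrak{d}_3(I)=\mathfrak{d}_1(I)$, valid since $K$ is infinite and $\depth R>0$) to force all three Dao numbers to vanish, with the ``in particular'' clause obtained by evaluating the defining minima at $k=0$. Nothing is missing; your explicit unwinding of the final assertion is merely a bit more detailed than the paper's ``The assertion follows.''
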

	\begin{proof}
		By Proposition \ref{Prop:CLm-fullDao}(c), $\mathfrak{d}_3(I)=0$. Thus \cite[Proposition 2.2]{MNQ23} implies that $\mathfrak{d}_1(I)=\mathfrak{d}_2(I)=0$. The assertion follows.
	\end{proof}
	
	\begin{Proposition}\label{Prop:BoundRegEquiDao}
		Let $R$ be a Koszul algebra with homogeneous maximal ideal $\mm$, and let $I\subset R$ be a homogeneous ideal generated in a single degree. Suppose that $K$ is infinite and $\depth R>0$. Then
		$$
		\mathfrak{d}_2(I)\le \mathfrak{d}_3(I)=\mathfrak{d}_1(I)\ \le\ \reg_{R}I-\alpha(I).
		$$
	\end{Proposition}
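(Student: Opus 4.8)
The plan is to reduce the whole statement to the assertion that $I\mm^k$ has a linear resolution once $k$ is large enough, and then to feed this into the machinery for componentwise linear ideals already developed in Corollary \ref{Cor:CLIDao}. Write $d=\alpha(I)$ for the single degree in which $I$ is generated and $r=\reg_R I$, and note $r\ge d$ always. Since $\mathfrak{d}_2(I)\le\mathfrak{d}_3(I)=\mathfrak{d}_1(I)$ by \cite[Proposition 2.2]{MNQ23}, it suffices to prove $\mathfrak{d}_3(I)\le r-d$, that is, that $I\mm^k$ is weakly $\mm$-full for every $k\ge r-d$. (As in the proof of Proposition \ref{Prop:CLm-fullDao}, the hypothesis $\depth R>0$ guarantees $I\mm^k\ne0$, so there is no degenerate case to worry about.)

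The computational heart of the argument is to identify $I\mm^k$ with a truncation of $I$. Since $R$ is standard graded we have $\mm^k=\bigoplus_{i\ge k}R_i$, and since $I$ is generated in degree $d$ we have $I_j=R_{j-d}I_d$ for $j\ge d$ and $I_j=0$ for $j<d$. A degree-by-degree check then gives
\[
(I\mm^k)_j=\begin{cases} I_j & \text{if } j\ge d+k,\\ 0 & \text{if } j<d+k,\end{cases}
\]
where the only nontrivial point is the inclusion $I_j=R_{j-d-k}R_kI_d=I_{j-k}R_k\subseteq I\mm^k$ for $j\ge d+k$, together with the vanishing $I_{j-b}=0$ for $b\ge k$ and $j<d+k$. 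Hence $I\mm^k=I_{\ge d+k}$.

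The second step is to invoke the truncation principle for Koszul algebras: over a Koszul algebra, for any finitely generated graded module $M$ and any $t\ge\reg_R M$, the truncation $M_{\ge t}$ is generated in degree $t$ and has a linear resolution. Applying this with $M=I$ and $t=d+k$, the hypothesis $t\ge r$ holds exactly when $k\ge r-d$; thus $I\mm^k=I_{\ge d+k}$ has a linear resolution for all $k\ge r-d$. An ideal generated in a single degree with a linear resolution is componentwise linear, so Corollary \ref{Cor:CLIDao} shows that each such $I\mm^k$ is weakly $\mm$-full. This gives $\mathfrak{d}_3(I)\le r-d=\reg_R I-\alpha(I)$, and the inequality $\mathfrak{d}_2(I)\le\mathfrak{d}_3(I)=\mathfrak{d}_1(I)$ completes the proof.

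The step that requires the most care is the Koszul-algebra version of the truncation principle, since the familiar statement lives over the polynomial ring; I expect to cite this from \cite{CDNR13} — the same source already used for Proposition \ref{Prop:CLm-fullDao} — or, if a self-contained deduction is preferred, to derive it from the regularity-of-products bound \cite[Lemma 4]{CDNR13} exactly as in the proof of Proposition \ref{Prop:CLm-fullDao}(a). Everything else is either the elementary identity $I\mm^k=I_{\ge\alpha(I)+k}$ or a direct appeal to Corollary \ref{Cor:CLIDao}.
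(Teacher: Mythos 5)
Your proof is correct and takes essentially the same route as the paper: the paper likewise identifies $I\mm^k$ with $I_{\langle\alpha(I)+k\rangle}$ (your $I_{\ge \alpha(I)+k}$, the same ideal since $I$ is equigenerated), cites \cite[Proposition 2.8]{CDNR13} for precisely the Koszul truncation principle you invoke, and concludes via Proposition \ref{Prop:CLm-fullDao}/Corollary \ref{Cor:CLIDao} together with \cite[Proposition 2.2]{MNQ23}. One caution about your proposed fallback: re-deriving the truncation principle from \cite[Lemma 4]{CDNR13} ``as in Proposition \ref{Prop:CLm-fullDao}(a)'' would not work, because that argument takes $\reg_R I_{\langle j-1\rangle}=j-1$ (componentwise linearity) as an input hypothesis, which is exactly what fails here when $\reg_R I>\alpha(I)$; the product bound only yields $\reg_R(I\mm^k)\le\reg_R I+k$, not linearity, so the direct citation of the truncation theorem (as in the paper) is the right move.
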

	\begin{proof}
		Since $I$ is generated in a single degree, we have $I\mm^k=I_{\langle\alpha(I)+k\rangle}$ for all $k$. From \cite[Proposition 2.8]{CDNR13} it follows that for all $k\ge\reg_RI-\alpha(I)$, the ideal $I_{\langle k\rangle}=I\mm^{k}$ has linear resolution. By Proposition \ref{Prop:CLm-fullDao}, it follows that $I\mm^k$ is weakly $\mm$-full, for all $k\ge\reg_{R}I-\alpha(I)$. Hence $\mathfrak{d}_3(I)\le\reg_{R}I-\alpha(I)$. Since $K$ is infinite and $\depth\,R>0$, \cite[Proposition 2.2]{MNQ23} implies that $\mathfrak{d}_2(I)\le\mathfrak{d}_1(I)=\mathfrak{d}_3(I)\le\reg_{R}I-\alpha(I)$.
	\end{proof}

	The following result generalizes \cite[Theorem 1.1]{FHM23} and the above proposition when $I$ is generated in more than one degree.
	
	\begin{Theorem}\label{Thm:FHM-Dao}
		Let $R$ be a Koszul algebra with maximal ideal $\mm$, and let $I\subset R$ be a homogeneous ideal. Suppose that $K$ is infinite and $\depth R>0$. Then
		$$
		\mathfrak{d}_2(I)\le \mathfrak{d}_3(I)=\mathfrak{d}_1(I)\ \le\ \reg_{\gr_\mm(R)}\gr_\mm(I).
		$$
	\end{Theorem}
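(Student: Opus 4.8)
The plan is to concentrate everything on the single assertion that $I\mm^k$ is componentwise linear as soon as $k\ge r:=\reg_{\gr_\mm(R)}\gr_\mm(I)$. Granting this, Proposition \ref{Prop:CLm-fullDao} shows that each such $I\mm^k$ is weakly $\mm$-full, so that $\mathfrak{d}_3(I)\le r$; the remaining relations $\mathfrak{d}_2(I)\le\mathfrak{d}_1(I)=\mathfrak{d}_3(I)$ follow verbatim from \cite[Proposition 2.2]{MNQ23}, exactly as in Corollary \ref{Cor:CLIDao} and Proposition \ref{Prop:BoundRegEquiDao}. Thus the entire content of the theorem is the componentwise linearity of the ideals $I\mm^k$ for $k\ge r$, and this is where the work lies.

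To reach that statement I would first pass to the single-degree components of $I$. Iterating Lemma \ref{Lemma:lir} yields $(I\mm^k)_{\langle j\rangle}=I_{\langle j-k\rangle}\mm^k$ for all $j$, and since $I_{\langle d\rangle}$ is generated in the single degree $d$ one checks, again by Lemma \ref{Lemma:lir}, that $I_{\langle d\rangle}\mm^k=(I_{\langle d\rangle})_{\langle d+k\rangle}$. Consequently $I\mm^k$ is componentwise linear exactly when each truncation $(I_{\langle d\rangle})_{\langle d+k\rangle}$ has a linear resolution. Here I would invoke the same tool used in Proposition \ref{Prop:BoundRegEquiDao}: by \cite[Proposition 2.8]{CDNR13}, the truncation $(I_{\langle d\rangle})_{\langle m\rangle}$ has a linear resolution once $m\ge\reg_R I_{\langle d\rangle}$. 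Taking $m=d+k$, it therefore suffices to guarantee $k\ge\reg_R I_{\langle d\rangle}-d$ for every $d$ with $I_{\langle d\rangle}\ne 0$.

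The theorem then reduces to the comparison $\reg_R I_{\langle d\rangle}-d\le r$ for all such $d$, which is the heart of the matter. To establish it I would use that $\gr_\mm(R)\cong R$ (valid because $R$ is standard graded with $\mm=R_+$, so $\mm^k/\mm^{k+1}\cong R_k$), together with the fact that $\gr_\mm(I)$ is naturally bigraded by the $\mm$-adic order $k$ and the internal $R$-degree $j$, multiplication by $\gr_\mm(R)_1$ preserving the difference $j-k$. This splits $\gr_\mm(I)$ into the diagonal strands $\gr_\mm(I)^{(c)}=\bigoplus_k (I\mm^k)_{k+c}/(I\mm^{k+1})_{k+c}$, whence $r=\max_c\reg_{\gr_\mm(R)}\gr_\mm(I)^{(c)}$. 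The goal is then to identify the strand $c=d$ with the component $I_{\langle d\rangle}$ — its generators at $\mm$-order $0$ are precisely the degree-$d$ minimal generators of $I$ — and to prove $\reg_{\gr_\mm(R)}\gr_\mm(I)^{(d)}\ge\reg_R I_{\langle d\rangle}-d$, which gives $r\ge\reg_R I_{\langle d\rangle}-d$ as required.

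I expect this last comparison to be the main obstacle, for two reasons: one must carefully reconcile the two gradings on $\gr_\mm(I)$, and one must pass from the regularity of an associated-graded strand back to the regularity of the component ideal itself. The Koszul hypothesis should be decisive here, since over a general algebra the passage from $\gr_\mm(I)$ to the filtered module $I$ need not preserve linearity of strands; I would drive the argument by the short exact sequences $0\to I\mm^{k+1}\to I\mm^k\to\gr_\mm(I)_k\to 0$ of the $\mm$-adic filtration together with the regularity rules (i)--(v). As a consistency check, when $I$ is generated in a single degree only the strand $c=\alpha(I)$ survives, giving $r=\reg_R I-\alpha(I)$ and recovering precisely Proposition \ref{Prop:BoundRegEquiDao}.
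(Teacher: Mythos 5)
Your outer reduction coincides with the paper's: once one knows that $I\mm^k$ is componentwise linear for all $k\ge r:=\reg_{\gr_\mm(R)}\gr_\mm(I)$, Proposition \ref{Prop:CLm-fullDao} and \cite[Proposition 2.2]{MNQ23} finish the argument. The problem is that you never actually prove that componentwise linearity. Your reduction via Lemma \ref{Lemma:lir} and \cite[Proposition 2.8]{CDNR13} is fine, but it converts the theorem into the inequality $\reg_R I_{\langle d\rangle}-d\le r$ for every $d$ with $I_{\langle d\rangle}\ne0$, and you then explicitly defer this inequality (``I expect this last comparison to be the main obstacle''). That comparison is the entire content of the theorem, not a routine verification: over the polynomial ring, where a truncation $(I_{\langle d\rangle})_{\langle d+k\rangle}$ has a linear resolution precisely when $d+k\ge\reg_S I_{\langle d\rangle}$, your needed inequality is equivalent to the statement you are trying to prove, so there is no hope of obtaining it for free.

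Moreover, the route you sketch for it cannot work as stated. The diagonal strand $\gr_\mm(I)^{(d)}$ is not a copy of $I_{\langle d\rangle}$: its order-$k$ component is $(I\mm^k)_{k+d}/(I\mm^{k+1})_{k+d}$, and since $(I\mm^{k+1})_{k+d}=\sum_{e\le d-1}R_{k+d-e}I_e$, this is $R_kI_d$ modulo interference from all lower-degree components of $I$. In particular your proposed strand-wise inequality $\reg_{\gr_\mm(R)}\gr_\mm(I)^{(d)}\ge\reg_R I_{\langle d\rangle}-d$ is false in general: for $I=(x)\subset S=K[x,y]$ and $d=2$ one has $(I\mm^k)_{k+2}=(I\mm^{k+1})_{k+2}=xS_{k+1}$ for every $k$, so the strand $\gr_\mm(I)^{(2)}$ is the zero module, while $I_{\langle 2\rangle}=x\mm$ satisfies $\reg_S I_{\langle 2\rangle}-2=0$. (The identification fails exactly when $I$ has no minimal generator in degree $d$ but $I_{\langle d\rangle}\ne0$.) The paper sidesteps all of this with one structural input that your proposal lacks: the Herzog--Iyengar/R\"omer characterization (\cite[1.5 Proposition]{HI2005}, \cite[Theorem 3.2.8]{R2001}) that a finitely generated graded module $M$ over a Koszul algebra is componentwise linear if and only if $\gr_\mm(M)$ has a linear resolution. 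Granting this, one applies \cite[Proposition 8]{CDNR13} directly to $\gr_\mm(I)$ over $\gr_\mm(R)$ in the order grading, obtaining that $\gr_\mm(I)_{\langle k\rangle}$ has a linear resolution for $k\ge r$, and then uses the identification $\gr_\mm(I)_{\langle k\rangle}=\gr_\mm(I\mm^k)$ to conclude that $I\mm^k$ is componentwise linear; no comparison between $\reg_R I_{\langle d\rangle}$ and $r$, and no strand decomposition, is ever needed. That filtered-to-graded equivalence (or something of equal strength) is the missing idea in your proof.
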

	\begin{proof}
		Recall that the \textit{associated graded module} of $I$
		$$
		\gr_\mm(I)\ =\ \bigoplus_{k\ge0}(I\mm^k/I\mm^{k+1})\ =\ (I/\mm I)\oplus (I\mm/I\mm^2)\oplus(I\mm^2/I\mm^3)\oplus\cdots
		$$
		is a finitely generated graded module over the associated graded ring $\gr_\mm(R)$. It follows from \cite[1.5 Proposition (1)-(3)]{HI2005} (see also \cite[Theorem 3.2.8]{R2001}) that a finitely generated graded $R$-module $M$ is componentwise linear if and only if $\gr_\mm(M)$ has a linear resolution. By \cite[Proposition 8]{CDNR13}, $\gr_\mm(I)_{\langle k\rangle}$ has a linear resolution for all $k\ge\reg_{\gr_\mm(R)}\gr_\mm(I)$. Notice that $\gr_\mm(I)_{\langle\ell\rangle}\ =\ \gr_\mm(I\mm^\ell)$ for any integer $\ell$. Thus, $I\mm^k$ is componentwise linear for all $k\ge\reg_{\gr_\mm(R)}\gr_\mm(I)$. By Proposition \ref{Prop:CLm-fullDao}, it follows that $I\mm^k$ is weakly $\mm$-full for all $k\ge\reg_{\gr_\mm(R)}\gr_\mm(I)$. Hence $\mathfrak{d}_3(I)\le\reg_{\gr_\mm(R)}\gr_\mm(I)$. Since $K$ is infinite and $\depth R>0$, by \cite[Proposition 2.2]{MNQ23} we conclude that $\mathfrak{d}_2(I)\le\mathfrak{d}_1(I)=\mathfrak{d}_3(I)\le\reg_{\gr_\mm(R)}\gr_\mm(I)$.
	\end{proof}

    \begin{Question}
    	Let $a\ge b\ge c\ge0$ be non-negative integers. Can we find a graded ideal $I\subset R$ such that $\reg_{\gr_\mm(R)}\gr_\mm(I)=a$, $\mathfrak{d}_1(I)=\mathfrak{d}_3(I)=b$ and  $\mathfrak{d}_2(I)=c$?
    \end{Question}
	
	Next, we present a large class of componentwise linear ideals.
	
	Let $u=x_1^{a_1}\cdots x_n^{a_n}\in S$ be a monomial where $S$ is the polynomial ring in the canonical presentation (\ref{eq:PresMapDao}). Since $R\cong S/\ker\varphi$, if the residue class $\overline{u}=u+\ker\varphi$ is non-zero, then we call $\overline{u}$ a \textit{monomial} of $R$. To simplify the notation, we denote $\overline{u}$ again by $u$. Notice that $u$ may have different representations in $R$. For example, in the Koszul algebra $K[x_1,x_2,x_3,x_4]/(x_1x_2-x_3x_4)$ we have $x_1x_2=x_3x_4$.
	
	Again let $R$ be a Koszul algebra and $I\subset R$ be an ideal. We say that $I$ is a \textit{monomial ideal} of $R$ if $I$ can be generated by monomials of $R$. In such case, we denote by $\mathcal{G}(I)$ any minimal monomial generating set of $I$.
	
	We say that $I$ has \textit{linear quotients} if there exists a minimal monomial generating set $\mathcal{G}(I)$ of $I$ and an order $\mathcal{O}:u_1<\dots<u_m$ of $\mathcal{G}(I)$ such that $(u_1,\dots,u_{j-1}):_R(u_j)$ is generated by variables, for $j=2,\dots,m$.
	
	\begin{Example}\label{Ex:C}
		\rm \cite[Example 1.20]{C2014} Consider the Koszul algebra 
		$$
		R\ =\ \frac{K[a,b,c,d]}{(ac,ad,ab-bd,a^2+bc,b^2)}.
		$$
		In this algebra, the ideal $I=(b)$ does not have linear resolution. Notice that in the usual polynomial ring $S=K[a,b,c,d]$ the ideal $I=(b)$ has linear quotients, and thus a linear resolution \cite[Proposition 8.2.1]{HH2011}.
	\end{Example}
	
	To guarantee that monomial ideals $I\subset R$ with linear quotients are componentwise linear we need to impose further conditions on the algebra $R$. As shown in \cite{KV22}, we must assume that $R$ is \textit{strongly Koszul}. A Koszul algebra $R$ is called \textit{strongly Koszul} if there exists a basis $X$ of $R_1$ such that for every proper subset $Y\subset X$ and every $x\in X\setminus Y$, there exists a subset $Z\subset X$ such that $(Y):_R(x)=(Z)$.
	
	A pivotal property of a strongly Koszul algebra $R$ is that any ideal of $R$ generated by linear forms has linear resolution \cite[Lemma 3.3]{KV22}. Hence, this fact together with \cite[Theorem 3.1]{LZ13} implies that
	\begin{Proposition}\label{Prop:LQ-CL-Dao}
		Let $R$ be a strongly Koszul algebra and $I\subset R$ be a monomial ideal with linear quotients order $\mathcal{O}:u_1<\dots<u_m$ satisfying $\deg(u_1)\le\dots\le\deg(u_m)$. Then $I$ is componentwise linear. In particular, $\mathfrak{d}_3(I)=0$.
	\end{Proposition}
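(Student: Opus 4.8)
The plan is to verify the hypotheses of the two results flagged just before the statement—\cite[Lemma 3.3]{KV22} and \cite[Theorem 3.1]{LZ13}—and let them do the work. The underlying mechanism is the standard one: linear quotients produce, for each $j$, a short exact sequence
$$
0\rightarrow (R/L_j)(-\deg u_j)\rightarrow R/(u_1,\dots,u_{j-1})\rightarrow R/(u_1,\dots,u_j)\rightarrow0,\qquad L_j=(u_1,\dots,u_{j-1}):_R(u_j),
$$
and an iterated mapping cone over these sequences builds a free resolution of $R/I$ whose linearity in each generating degree is controlled by the linearity of the resolutions of the $R/L_j$ together with the ordering of the degrees.

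First I would unwind the definition of linear quotients for $I$: there is a minimal monomial generating set $\mathcal{G}(I)=\{u_1,\dots,u_m\}$ and an order $\mathcal{O}:u_1<\dots<u_m$ with $\deg(u_1)\le\dots\le\deg(u_m)$ such that each colon ideal $L_j$ above is generated by variables, i.e.\ by linear forms of $R$. This is the only step where the ambient algebra intervenes, and it is immediate from the hypothesis.

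Next, since $R$ is strongly Koszul, \cite[Lemma 3.3]{KV22} guarantees that every ideal of $R$ generated by linear forms has a linear resolution; in particular each $L_j$ has a linear resolution. Hence all the data demanded by \cite[Theorem 3.1]{LZ13} are present—a monomial ideal with linear quotients, ordered by non-decreasing degrees, whose successive colon ideals have linear resolution—and that theorem yields that $I$ is componentwise linear. The final assertion is then immediate: once $I$ is componentwise linear, Proposition \ref{Prop:CLm-fullDao}(c) gives $\mathfrak{d}_3(I)=0$.

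The main (indeed essentially the only) obstacle is conceptual rather than computational: one must check that the two hypotheses of \cite[Theorem 3.1]{LZ13} are genuinely met simultaneously. The non-decreasing degree condition is exactly what is imposed in the statement, while the linearity of the colon ideals—automatic in the polynomial ring $S$, where colon ideals of monomial ideals are generated by variables—is precisely what strong Koszulness buys through \cite[Lemma 3.3]{KV22}. I would stress that, unlike the $S$ case where the Bj\"orner--Wachs rearrangement lemma \cite[Lemma 2.1]{JZ10} lets one drop the degree ordering, in a general strongly Koszul algebra it is unclear whether the monotonicity of degrees can be removed, so I would keep that hypothesis in force throughout.
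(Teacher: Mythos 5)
Your proposal is correct and follows essentially the same route as the paper: the paper likewise combines \cite[Lemma 3.3]{KV22} (in a strongly Koszul algebra, ideals generated by linear forms have linear resolutions, hence the colon ideals $L_j$, being generated by variables, do) with \cite[Theorem 3.1]{LZ13} to conclude componentwise linearity, and then obtains $\mathfrak{d}_3(I)=0$ from Proposition \ref{Prop:CLm-fullDao}(c).
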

	
	See also \cite{CFL23} for the exterior algebra case. Example \ref{Ex:C} shows that we can not drop the assumption that $R$ is a strongly Koszul algebra.
	
	If $I$ is a monomial ideal in the standard graded polynomial ring $S=K[x_1,\dots,x_n]$ having linear quotients order $\mathcal{O}:u_1<\dots<u_m$, then by the usual Bj\"orner-Wachs rearrangement lemma \cite[Lemma 2.1]{JZ10} (see also \cite{BW96}) we can always also find an order $\mathcal{O}':u_{i_1}<\dots<u_{i_m}$ such that $\deg(u_{i_1})\le\dots\le\deg(u_{i_m})$. Thus, at least in the case of a polynomial ring, the condition on the degrees $\deg(u_1)\le\dots\le\deg(u_m)$ given in Proposition \ref{Prop:LQ-CL-Dao} can be removed. It is not clear whether this is also the case for monomial ideals with linear quotients in a strongly Koszul algebra
	
	\section{The Dao numbers of monomial ideals}\label{Sec:Dao3}
	
	While Theorem \ref{Thm:FHM-Dao} shows that the number $\reg_{\gr_\mm(R)}\gr_\mm(I)$ is an upper bound for the Dao numbers, it may be difficult to compute it. In this section, we consider the more specific class of monomial ideals in a standard graded polynomial ring, and we provide combinatorial bounds for the Dao numbers.\smallskip
	
	Let $S=K[x_1,\dots,x_n]$ be the standard graded polynomial ring over a infinite field $K$ and let $\mm=(x_1,\dots,x_n)$ be the unique homogeneous maximal ideal.
	
	Let ${\bf a}=(a_1,\dots,a_n)\in\ZZ_{\ge0}^n$. We set ${\bf a}[i]=a_i$ for all $i$, ${\bf x^a}=\prod_{i}x_i^{{\bf a}[i]}$. In particular, ${\bf x^0}=1$ for ${\bf 0}=(0,0,\dots,0)$. The monomial ${\bf x^a}$ is called \textit{squarefree} if ${\bf a}[i]\in\{0,1\}$ for all $i$. A monomial ideal $I\subset S$ is called \textit{squarefree} if $I$ is generated by squarefree monomials. The next easy observation will be used several times.
	
	\begin{Remark}\label{Rem:KnotImpDao}
		\rm Let $I\subset S$ be a monomial ideal. Notice that $\depth S=n\ge1$. Since $K$ is infinite, by \cite[Proposition 2.2]{MNQ23} we have $\mathfrak{d}_2(I)\le\mathfrak{d}_1(I)=\mathfrak{d}_3(I)$.
	\end{Remark}
	
	We first combinatorially bound $\reg_{\gr_\mm(S)}\gr_\mm(I)$ for a monomial ideal $I\subset S$. Then, by Theorem \ref{Thm:FHM-Dao} we obtain some bounds for the Dao numbers. For such aim, we recall the next result \cite[Theorem 1.5]{FHM23}.
	\begin{Proposition}\label{Prop:FHM-Dao}
		Let $S=K[x_1,\dots,x_n]$ be the standard graded polynomial ring and let $I\subset S$ be a monomial ideal. Then $I\mm^k$ has linear quotients for all $k\gg0$.
	\end{Proposition}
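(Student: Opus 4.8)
The plan is to produce an explicit linear-quotients order on the minimal generators of $I\mm^k$ and to verify the defining colon condition once $k$ is large. Throughout I would use the combinatorial description of linear quotients: a monomial ideal $L$ with totally ordered minimal generators $v_1<\cdots<v_N$ has linear quotients precisely when, for every pair $i<j$, there are an index $l<j$ and a variable $x_t$ with $v_l:v_j=(x_t)$ and $x_t\mid v_i:v_j$, where $v:w=v/\gcd(v,w)$. Equivalently, for each generator $w$ the colon $(\{v:v<w\}):w$ must be generated by variables, so it suffices to show that whenever $v<w$ and a variable $x_t$ divides $v:w$, the monomial $x_tw$ is divisible by some minimal generator of $I\mm^k$ that precedes $w$.

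First I would record the structural fact that governs everything: if $w$ is a minimal generator of $I\mm^k$ and $u\in\mathcal{G}(I)$ divides $w$, then $\deg(w/u)=k$. Indeed $\deg(w/u)\ge k$ since $u$ witnesses $w\in I\mm^k$, and if $\deg(w/u)\ge k+1$ then $w/x_b$ still lies in $I\mm^k$ for any variable $x_b\mid w/u$, contradicting minimality. Hence every minimal generator factors as $w=u\,m$ with $m\in\mm^k$ of degree exactly $k$, and all generators of $I$ dividing $w$ share one degree. Taking $k$ large means this $\mm^k$-part $m$ is a high-degree monomial, which supplies the room for the replacements below.

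Next I would fix the order on $\mathcal{G}(I\mm^k)$ by sorting on total degree and breaking ties by the lexicographic order induced by $x_1>\cdots>x_n$. The key lemma is then the colon condition. Given $v<w$ and $x_t\mid v:w$, the lexicographic tie-break forces the first variable $x_s$ at which $v,w$ differ to satisfy $s<t$ and $x_s\mid w$. To realize $x_t$ in the colon I would use the replacement $v'=x_t\,w/x_a$ for a variable $x_a$ with $a<t$ lying in the $\mm^k$-part $m$ of some factorization $w=u\,m$: then $v'=u\,(x_t m/x_a)\in I\mm^k$, one checks $v':w=(x_t)$ exactly, and $v'<w$ because lowering the exponent of the lexicographically larger $x_a$ (with $a<t$) decreases $w$. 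Since $v'\mid x_tw$, any minimal generator $v''\mid v'$ also divides $x_tw$; as $\deg v''\le\deg v'=\deg w$, the degree-first ordering places $v''$ before $w$, so $x_t$ lands in the colon. The whole step therefore reduces to the purely existential claim that, for $k\gg0$, some admissible factorization $w=u\,m$ has a variable of index $<t$ dividing $m$.

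The hard part is exactly the degenerate configurations of this last claim: the case in which, for every witness factorization $w=u\,m$, the part $m$ is supported only on variables of index $\ge t$, so no decreasing replacement lives in any $\mm^k$-part. Here I would instead have to exhibit a preceding generator through a different $u'\in\mathcal{G}(I)$ dividing $x_tw$, and argue, using minimality of $w$ together with $\deg m=k$, that once $k$ exceeds an explicit threshold built from the exponents occurring in $\mathcal{G}(I)$ (this is what the invariants $\lambda_{I,\mathcal{O},u}$ attached to an order $\mathcal{O}$ of $\mathcal{G}(I)$ measure), such a $u'$ exists and the resulting generator precedes $w$. Bounding these thresholds uniformly over all pairs $(v,w)$ is the combinatorial core of the proof, and it is precisely this bookkeeping that yields both the eventual linear quotients and the explicit estimate $\min_{\mathcal{O}}\max_{u\in\mathcal{G}(I)}\lambda_{I,\mathcal{O},u}$.
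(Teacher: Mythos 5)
Your argument is not complete, and the place where it stops is precisely the content of the proposition. You verify the colon condition only in the case where some witness factorization $w=u\,m$ (with $u\in\mathcal{G}(I)$, $\deg m=k$) has a variable $x_a$ of index $a<t$ dividing $m$; there the replacement $v'=x_tw/x_a$ does work. But the remaining ``degenerate'' case, where every witness factorization has its $\mm^k$-part supported on indices $\ge t$, is exactly what you leave as ``the combinatorial core of the proof'': you assert that for $k$ beyond a threshold related to the invariants $\lambda_{I,\mathcal{O},u}$ a suitable $u'\in\mathcal{G}(I)$ exists and produces a preceding generator, but you neither construct $u'$ nor formulate, let alone prove, the threshold statement. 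Without this, it is not established that your degree-then-lex order (or any order) gives linear quotients on $I\mm^k$ for $k\gg0$; it is not even clear that this particular order works. There is also a genuine error upstream: your ``structural fact'' is false. It is true that every $w\in\mathcal{G}(I\mm^k)$ admits \emph{some} factorization $w=u\,m$ with $\deg m=k$, and your argument that $\deg(w/u)\le k$ for every $u\in\mathcal{G}(I)$ dividing $w$ is correct; but the inequality $\deg(w/u)\ge k$ does not hold for every such $u$ --- ``$u$ witnesses $w\in I\mm^k$'' is circular, since divisibility by $u$ alone does not place $w$ in $u\mm^k$. For instance, with $I=(x,y^3)\subset K[x,y]$ and $k=4$, the monomial $w=xy^4$ is a minimal generator of $I\mm^4$ (its proper divisors have degree at most $4$, while $I\mm^4$ lives in degrees at least $5$), and $y^3\mid w$ with $\deg(w/y^3)=2$. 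So ``all generators of $I$ dividing $w$ share one degree'' fails, undermining any case analysis that leans on it.

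For comparison, the proof the paper relies on (\cite[Theorem 1.5]{FHM23}, which the paper recalls rather than reproves) avoids your hard case entirely by inducting on $k$ instead of analyzing $I\mm^k$ directly for large $k$. One fixes an arbitrary order $\mathcal{O}:u_1<\dots<u_m$ on $\mathcal{G}(I)$, defines the defect $\lambda_{I,\mathcal{O},u_i}=\sum_{j=1}^{\ell_i}(\deg(w_{i,j})-1)$ where $(u_1,\dots,u_{i-1}):_S(u_i)=(w_{i,1},\dots,w_{i,\ell_i})$, so that linear quotients with respect to $\mathcal{O}$ means all defects vanish, and then proves the key monotonicity: on $\mathcal{G}(I\mm)$, with the induced order $\mathcal{O}_1$ (sort by the index $i$ of $u_i$ first, then by the multiplying variable), one has $\lambda_{I\mm,\mathcal{O}_1,x_ju_i}\le\lambda_{I,\mathcal{O},u_i}$, with strict inequality whenever $\lambda_{I,\mathcal{O},u_i}>0$. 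Iterating, all defects are zero once $k\ge\max_i\lambda_{I,\mathcal{O},u_i}$, which simultaneously proves the proposition and yields the bound in Theorem \ref{Thm:LinQuotBoundDao}. This decreasing-defect induction is the idea your proposal is missing: in it the $\lambda$'s act as a measure that strictly drops with each multiplication by $\mm$, not as thresholds to be checked against a fixed order on $\mathcal{G}(I\mm^k)$.
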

	
	Analyzing carefully the proof of \cite[Theorem 1.5]{FHM23} we will obtain the desired combinatorial bounds for the Dao numbers.
	
	Let $I\subset S$ be a monomial ideal. In this case, $I$ has a unique minimal monomial generating set, denoted as usual by $\mathcal{G}(I)$. Fix any order $\mathcal{O}: u_1<\cdots<u_m$ of $\mathcal{G}(I)$. For each $2\leq i\leq m$, let $(u_1,\ldots,u_{i-1}):_S(u_i)=(w_{i,1},\ldots,w_{i,\ell_i})$. We set 
	$$
	\lambda_{I,\mathcal{O},u_i}=\sum_{j=1}^{\ell_i}(\deg(w_{i,j})-1),
	$$
	for $2\le i\le m$, and $\lambda_{I,\mathcal{O},u_1}=0$. It is clear that $I$ has linear quotients with respect to the order $\mathcal{O}$ if and only if $\lambda_{I,\mathcal{O},u_i}=0$ for all $1\le i\le r$.
	
	Notice that $I\mm$ is generated by the set $\{x_ju_i\ :\ j=1,\dots,n,\ i=1,\dots,m\}$. After removing the non-minimal generators of $I\mm$ and any repeated element whenever it appears again, we obtain the minimal generating set $\mathcal{G}(I\mm)$. Let $x_{j_1}u_{i_1},x_{j_2}u_{i_2}$ be two minimal generators of $I\mm$. We set $x_{j_1}u_{i_1}<x_{j_2}u_{i_2}$ if $i_1=i_2$ and $j_1<j_2$ or $i_1<i_2$. We denote by $\mathcal{O}_1$ this order. It is shown in the proof of \cite[Theorem 1.5]{FHM23} that for all $i$ and $j$ such that $x_ju_i\in\mathcal{G}(I\mm)$ we have that $\lambda_{I\mm,\mathcal{O}_1,x_ju_i}\leq \lambda_{I,\mathcal{O},u_i}$ and  if $\lambda_{I,\mathcal{O},u_i}>0$, then $\lambda_{I\mm,\mathcal{O}_1,x_ju_i}< \lambda_{I,\mathcal{O},u_i}$. Hence, iterating this process for all $k\ge1$, and calling $\mathcal{O}_k$ the order of $\mathcal{G}(I\mm^k)$ obtained as explained above, we see that for all
	$$
	k\ \ge\ \max\{\lambda_{I,\mathcal{O},u_i}\ :\ i=1,\dots,m\}
	$$
	we have $\lambda_{I\mm^k,\mathcal{O}_k,v}=0$ for all $v\in\mathcal{G}(I\mm^k)$. This means, as we observed above, that $I\mm^k$ indeed has linear quotients with linear quotients order $\mathcal{O}_k$.
	
	Our discussion shows that
	
	\begin{Theorem}\label{Thm:LinQuotBoundDao}
		Let $S=K[x_1,\dots,x_n]$ be the standard graded polynomial ring over an infinite field $K$ and let $I\subset S$ be a monomial ideal. Then
		\[
		\mathfrak{d}_2(I)\le\mathfrak{d}_3(I)=\mathfrak{d}_1(I)\ \le\ \min_{\mathcal{O}}(\max_{u\in\mathcal{G}(I)}\lambda_{I,\mathcal{O},u}),
		\]
		where the minimum is taken over all possible orders $\mathcal{O}$ of $\mathcal{G}(I)$.
	\end{Theorem}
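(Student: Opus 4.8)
The combinatorial heart of the argument is already assembled in the discussion immediately preceding the statement, so the plan is to package those facts into a clean deduction. I would fix an arbitrary order $\mathcal{O}:u_1<\dots<u_m$ of $\mathcal{G}(I)$ and set $\lambda=\max_{u\in\mathcal{G}(I)}\lambda_{I,\mathcal{O},u}$. The first step is to invoke the comparison established in the proof of \cite[Theorem 1.5]{FHM23}: with respect to the induced order $\mathcal{O}_1$ on $\mathcal{G}(I\mm)$ one has $\lambda_{I\mm,\mathcal{O}_1,x_ju_i}\le\lambda_{I,\mathcal{O},u_i}$, with strict inequality whenever $\lambda_{I,\mathcal{O},u_i}>0$. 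Iterating this $k$ times along the orders $\mathcal{O}_k$ of $\mathcal{G}(I\mm^k)$, the quantities $\lambda$ strictly decrease until they vanish, so that $\lambda_{I\mm^k,\mathcal{O}_k,v}=0$ for every $v\in\mathcal{G}(I\mm^k)$ as soon as $k\ge\lambda$. In other words, $I\mm^k$ has linear quotients for all $k\ge\lambda$.

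The second step converts linear quotients into the required fullness. A monomial ideal of $S$ with linear quotients is componentwise linear: using the Björner--Wachs rearrangement lemma \cite[Lemma 2.1]{JZ10} I would replace $\mathcal{O}_k$ by an order of $\mathcal{G}(I\mm^k)$ with non-decreasing degrees, whereupon each truncation $(I\mm^k)_{\langle j\rangle}$ inherits linear quotients in a single degree and hence a linear resolution. Since $S$ is a Koszul algebra, $K$ is infinite and $\depth S=n>0$, Proposition \ref{Prop:CLm-fullDao} then applies to the componentwise linear ideal $J=I\mm^k$ and yields $(J\mm:\mm)=J$, i.e. $(I\mm^{k+1}:\mm)=I\mm^k$. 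Thus $I\mm^k$ is weakly $\mm$-full for every $k\ge\lambda$, which by the definition of $\mathfrak{d}_3$ gives $\mathfrak{d}_3(I)\le\lambda=\max_{u\in\mathcal{G}(I)}\lambda_{I,\mathcal{O},u}$.

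The last step is formal. As $\mathcal{O}$ ranges over all orders of $\mathcal{G}(I)$, taking the minimum of the right-hand side produces $\mathfrak{d}_3(I)\le\min_{\mathcal{O}}(\max_{u\in\mathcal{G}(I)}\lambda_{I,\mathcal{O},u})$. Finally, the equality $\mathfrak{d}_3(I)=\mathfrak{d}_1(I)$ together with $\mathfrak{d}_2(I)\le\mathfrak{d}_3(I)$ follows from \cite[Proposition 2.2]{MNQ23}, as recorded in Remark \ref{Rem:KnotImpDao}, since $K$ is infinite and $\depth S>0$. This closes the asserted chain of inequalities.

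I do not anticipate a serious obstacle, precisely because the delicate estimate---the strict decrease of the $\lambda$-invariants under multiplication by $\mm$---is exactly what is quoted from \cite{FHM23}. The one point that must be handled with care is the passage from \emph{linear quotients} to \emph{componentwise linear}: the generators of $I\mm^k$ are produced in the degree-mixed order $\mathcal{O}_k$, and one must justify the degree rearrangement so that Proposition \ref{Prop:CLm-fullDao} can legitimately be applied to each power $I\mm^k$.
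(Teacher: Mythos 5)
Your proposal is correct and follows essentially the same route as the paper: the paper's proof is precisely the discussion preceding the statement (the strict decrease of the $\lambda$-invariants from the proof of \cite[Theorem 1.5]{FHM23}, iterated until $I\mm^k$ has linear quotients for $k\ge\max_{u}\lambda_{I,\mathcal{O},u}$), combined implicitly with the facts that linear quotients imply componentwise linear (via the rearrangement lemma \cite[Lemma 2.1]{JZ10}) and that componentwise linear ideals are weakly $\mm$-full by Proposition \ref{Prop:CLm-fullDao}, plus \cite[Proposition 2.2]{MNQ23} as in Remark \ref{Rem:KnotImpDao}. Your write-up merely makes explicit the intermediate steps the paper leaves tacit, which is a faithful rendering of the intended argument.
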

	
	For two monomials $u$ and $v$, we set $u:v=u/\gcd(u,v)$. Let $u_1,\dots,u_j\in S$ be monomials. Then $(u_1,\dots,u_{j-1}):_S(u_j)$ is generated by the monomials $u_i:u_j$ for $i=1,\dots,j-1$ \cite[Proposition 1.2.2]{HH2011}. We denote by $\mu(I)=\dim_K(I/\mm I)$ the minimal number of generators of $I$.
	
	\begin{Corollary}\label{Cor:d1d2d3Dao}
		Let $S=K[x_1,\dots,x_n]$ be the standard graded polynomial ring over an infinite field $K$ and let $I\subset S$ be a monomial ideal. Then
		$$
		\mathfrak{d}_2(I)\le\mathfrak{d}_3(I)=\mathfrak{d}_1(I)\ \le\ (\!\!\,\!\sum_{u\in\mathcal{G}(I)}\deg(u))+1-\mu(I)-\omega(I).
		$$
	\end{Corollary}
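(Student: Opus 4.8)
The plan is to feed Theorem~\ref{Thm:LinQuotBoundDao} a well-chosen order of $\mathcal{G}(I)$. Since that theorem already gives $\mathfrak{d}_2(I)\le\mathfrak{d}_3(I)=\mathfrak{d}_1(I)\le\min_{\mathcal{O}}\max_{u\in\mathcal{G}(I)}\lambda_{I,\mathcal{O},u}$, it suffices to exhibit \emph{one} order $\mathcal{O}$ of $\mathcal{G}(I)=\{u_1,\dots,u_m\}$ for which $\max_i\lambda_{I,\mathcal{O},u_i}$ is bounded by the right-hand side. Write $m=\mu(I)$ and $d=\omega(I)$; unwinding the definitions of $\mu$ and $\omega$, we have $m=|\mathcal{G}(I)|$ and $d=\max_{u\in\mathcal{G}(I)}\deg(u)$. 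The first step is the bookkeeping identity
\[
\Big(\sum_{u\in\mathcal{G}(I)}\deg(u)\Big)+1-\mu(I)-\omega(I)\ =\ \sum_{k=1}^{m-1}(\deg(u_k)-1),
\]
valid for any order in which the last generator $u_m$ has maximal degree $d$; indeed, once the summand $\deg(u_m)=d$ is split off, the left side becomes $\big(\sum_{k=1}^{m-1}\deg(u_k)\big)+1-m$. So I fix such an order, for instance one with $\deg(u_1)\le\dots\le\deg(u_m)$.

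Next I would estimate each $\lambda_{I,\mathcal{O},u_i}$ for this order. By \cite[Proposition 1.2.2]{HH2011} the colon ideal $(u_1,\dots,u_{i-1}):_S(u_i)$ is generated by the monomials $u_k:u_i$ with $1\le k\le i-1$, so its minimal monomial generators $w_{i,1},\dots,w_{i,\ell_i}$ form a subset of $\{u_k:u_i:1\le k\le i-1\}$. Since distinct minimal generators are distinct monomials, I may choose distinct indices $k_1,\dots,k_{\ell_i}\in\{1,\dots,i-1\}$ with $w_{i,j}=u_{k_j}:u_i$. As $u_i$ is a minimal generator, no $u_{k_j}$ divides $u_i$, so $\deg(w_{i,j})\ge1$; moreover $\deg(w_{i,j})=\deg(u_{k_j})-\deg\gcd(u_{k_j},u_i)\le\deg(u_{k_j})$. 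Combining these, and using $\deg(u_k)-1\ge0$ for every generator of the proper ideal $I$,
\[
\lambda_{I,\mathcal{O},u_i}\ =\ \sum_{j=1}^{\ell_i}(\deg(w_{i,j})-1)\ \le\ \sum_{j=1}^{\ell_i}(\deg(u_{k_j})-1)\ \le\ \sum_{k=1}^{i-1}(\deg(u_k)-1),
\]
the last inequality holding because $\{k_1,\dots,k_{\ell_i}\}$ is a subset of $\{1,\dots,i-1\}$ with non-negative summands.

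Finally, since the partial sums $\sum_{k=1}^{i-1}(\deg(u_k)-1)$ are non-decreasing in $i$, the maximum over $i=1,\dots,m$ is attained at $i=m$, giving
\[
\max_{1\le i\le m}\lambda_{I,\mathcal{O},u_i}\ \le\ \sum_{k=1}^{m-1}(\deg(u_k)-1)\ =\ \Big(\sum_{u\in\mathcal{G}(I)}\deg(u)\Big)+1-\mu(I)-\omega(I),
\]
using the identity of the first step. Together with Theorem~\ref{Thm:LinQuotBoundDao} this is the claim. The only genuinely delicate point is the injective choice of the indices $k_j$—that is, that each minimal generator of the colon ideal can be charged to a \emph{distinct} earlier generator $u_{k_j}$—but this is automatic, since equal values $u_{k_j}:u_i$ collapse to a single monomial and hence distinct $w_{i,j}$ force distinct $k_j$; everything else is degree bookkeeping together with the choice of placing a top-degree generator last.
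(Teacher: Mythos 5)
Your proposal is correct and follows essentially the same route as the paper: fix a degree-increasing order of $\mathcal{G}(I)$, bound each $\lambda_{I,\mathcal{O},u_j}$ by $\sum_{k=1}^{m-1}(\deg(u_k)-1)$ using that the colon ideal is generated by the monomials $u_k:u_j$ with $\deg(u_k:u_j)\le\deg(u_k)$, rewrite that sum as $(\sum_{u\in\mathcal{G}(I)}\deg(u))+1-\mu(I)-\omega(I)$, and invoke Theorem~\ref{Thm:LinQuotBoundDao}. Your only deviation is the explicit injective choice of indices $k_j$ to account for the minimal generators of the colon ideal being a (possibly proper) subset of $\{u_k:u_j\}$, a point the paper glosses over by writing an equality where an inequality suffices; this is a welcome refinement but not a different argument.
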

	\begin{proof}
		Let $\mathcal{O}:u_1<\dots<u_m$ be an order of $\mathcal{G}(I)$ with $\deg(u_1)\le\dots\le\deg(u_m)$. For all $2\le j\le m$, we have that $(u_1,\dots,u_{j-1}):_S(u_j)=(u_i:u_j, i=1,\dots,j-1)$. Hence, for all $2\le j\le m$, we have
		\begin{align*}
			\lambda_{I,\mathcal{O},u_j}\ &=\ \sum_{i=1}^{j-1}(\deg(u_i:u_j)-1)\ \le\ \sum_{i=1}^{m-1}(\deg(u_i)-1).
		\end{align*}
	
		Since $\sum_{i=1}^{m-1}\deg(u_i)=\sum_{u\in\mathcal{G}(I)}\!\deg(u)-\deg(u_m)=\sum_{u\in\mathcal{G}(I)}\deg(u)-\omega(I)$ and $m=\mu(I)$, we conclude that $\lambda_{I,\mathcal{O},u_j}\le(\sum_{u\in\mathcal{G}(I)}\deg(u))+1-\mu(I)-\omega(I)$, for all $j=1,\dots,m$. The assertion follows from Theorem \ref{Thm:LinQuotBoundDao}.
	\end{proof}
	
	Let $I\subset S$ be a graded ideal. Even if $I$ does not have a unique minimal generating set, all minimal generating sets $f_1,\dots,f_m$ of $I$ have the same number $\mu(I)=\dim_K(I/\mm I)$ of generators and the number $\sum_{i=1}^m\deg(f_i)$ does not depend on the particular minimal generating set. This follows because $I/\mm I$ is a graded $K$-vector space. Therefore, in light of the above corollary, it makes sense to ask
	
	\begin{Question}
		Let $I\subset S$ be a graded ideal. It is true that
		$$
		\mathfrak{d}_2(I)\le\mathfrak{d}_3(I)=\mathfrak{d}_1(I)\ \le\ (\sum_{f\in\mathcal{B}}\deg(f))+1-\mu(I)-\omega(I),
		$$
		where $\mathcal{B}$ is any graded $K$-basis of $I/\mm I$?
	\end{Question}
	
	Now, we propose a different bound for the Dao numbers of monomial ideals.
	
	For this aim, following \cite{F2}, we define the \textit{bounding multidegree} of a monomial ideal $I\subset S$ to be the vector ${\bf deg}(I)=(\deg_{x_1}(I),\dots,\deg_{x_n}(I))$, with
	$$
	\deg_{x_i}(I)\ =\ \max_{u\in\mathcal{G}(I)}\deg_{x_i}(u),\ \ \textup{for all}\ \ \ 1\le i\le n.
	$$
	
	Here for a monomial $u={\bf x^a}$, we set $\deg_{x_i}(u)={\bf a}[i]=\max\{j\ :\ x_i^j\ \textup{divides}\ u\}$. Furthermore, for ${\bf a}\in\ZZ_{\ge0}^n$, we set $|{\bf a}|=\sum_{i=1}^n{\bf a}[i]$. Thus $\deg({\bf x^a})=|{\bf a}|$.
	\begin{Theorem}\label{Thm:GraphsDao}
		Let $I\subset S$ be a monomial ideal. Then
		$$
		\mathfrak{d}_2(I)\le\mathfrak{d}_3(I)=\mathfrak{d}_1(I)\ \le\ \max\{|{\bf deg}(I)|-n,0\}.
		$$
		In particular, if $I$ is squarefree then $\mathfrak{d}_1(I)=\mathfrak{d}_2(I)=\mathfrak{d}_3(I)=0.$
	\end{Theorem}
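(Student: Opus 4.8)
The plan is to bypass the regularity machinery of the earlier sections and instead verify weak $\mm$-fullness directly by a purely monomial computation driven by the bounding multidegree. By Remark \ref{Rem:KnotImpDao} we already have $\mathfrak{d}_2(I)\le\mathfrak{d}_1(I)=\mathfrak{d}_3(I)$, so it suffices to bound $\mathfrak{d}_3(I)$. Writing $d_i=\deg_{x_i}(I)$ and $D=|{\bf deg}(I)|=\sum_i d_i$, I would show that $I\mm^k$ is weakly $\mm$-full, i.e. $(I\mm^{k+1}:\mm)=I\mm^k$, for every $k\ge D-n$. Since all ideals involved are monomial, the colon $(I\mm^{k+1}:\mm)$ is a monomial ideal containing $I\mm^k$, and equality reduces to showing that every monomial $m\in(I\mm^{k+1}:\mm)$ already lies in $I\mm^k$.

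First I would record two elementary membership criteria for a monomial $m$. On the one hand, $m\in I\mm^k$ if and only if there is $u\in\mathcal{G}(I)$ with $u\mid m$ and $\deg(m)-\deg(u)\ge k$. On the other hand, since $\mm=(x_1,\dots,x_n)$, one has $m\in(I\mm^{k+1}:\mm)$ if and only if $mx_j\in I\mm^{k+1}$ for each $j$, i.e. for each $j$ there is $u_j\in\mathcal{G}(I)$ with $u_j\mid mx_j$ and $\deg(mx_j)-\deg(u_j)\ge k+1$, which rearranges to $\deg(m)-\deg(u_j)\ge k$. Arguing by contradiction, I would assume $m\in(I\mm^{k+1}:\mm)\setminus I\mm^k$. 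Then for each $j$ the witness $u_j$ cannot divide $m$, for otherwise $u_j\mid m$ together with $\deg(m)-\deg(u_j)\ge k$ would force $m\in I\mm^k$ by the first criterion. Hence $u_j\mid mx_j$ but $u_j\nmid m$, so the only failure of divisibility comes from the variable $x_j$; comparing exponents in $u_j\mid mx_j$ then yields $\deg_{x_j}(u_j)=\deg_{x_j}(m)+1$.

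The key step is now immediate. Since $u_j\in\mathcal{G}(I)$, the definition of the bounding multidegree gives $\deg_{x_j}(u_j)\le d_j$, whence $\deg_{x_j}(m)\le d_j-1$ for \emph{every} $j$. Summing over $j$ produces $\deg(m)\le\sum_j(d_j-1)=D-n$. Conversely, fixing any single $j$ and using $\deg_{x_j}(u_j)=\deg_{x_j}(m)+1\ge1$, so that $\deg(u_j)\ge1$, the inequality $\deg(m)-\deg(u_j)\ge k$ gives $\deg(m)\ge k+1$. Combining the two bounds yields $k+1\le D-n$, i.e. $k\le D-n-1$, which contradicts the hypothesis $k\ge D-n$. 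Therefore no such $m$ exists, $I\mm^k$ is weakly $\mm$-full for all $k\ge D-n$, and $\mathfrak{d}_3(I)\le\max\{D-n,0\}$. For the final assertion, if $I$ is squarefree then $d_i\le1$ for all $i$, so $D\le n$ and $\max\{D-n,0\}=0$, giving $\mathfrak{d}_1(I)=\mathfrak{d}_2(I)=\mathfrak{d}_3(I)=0$.

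I do not anticipate a genuine obstacle: the whole argument is per-variable bookkeeping, and its only real content is the observation that each witness $u_j$ must carry exactly one more power of $x_j$ than $m$, which simultaneously bounds \emph{every} coordinate of $m$ by $d_j-1$ and thereby controls its total degree. The sole point requiring slight care is the behaviour of variables $x_i$ dividing no generator (where $d_i=0$): there the bound $\deg_{x_i}(m)\le d_i-1=-1$ is vacuous for nonnegative exponents and simply means such a variable can never occur in a bad $m$, which is consistent and only reinforces the degree estimate, so the argument goes through uniformly without a separate case.
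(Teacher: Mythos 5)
Your proof is correct and follows essentially the same route as the paper's: both arguments show that a monomial $m\in(I\mm^{k+1}:\mm)\setminus I\mm^k$ forces, for every variable $x_j$, a witness generator $u_j\in\mathcal{G}(I)$ with $\deg_{x_j}(u_j)=\deg_{x_j}(m)+1$, whence $\deg_{x_j}(m)\le\deg_{x_j}(I)-1$ for all $j$ and so $\deg(m)\le|{\bf deg}(I)|-n$, while also $\deg(m)\ge k+1$, a contradiction. The only difference is organizational: the paper first establishes the colon formula of Lemma \ref{Lem:GraphsDao} and treats separately the case of a variable dividing no generator, whereas your direct membership criteria absorb that case uniformly --- a mild streamlining, not a different argument.
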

	
	In order to prove the theorem, we shall need the following lemma. For an integer $n\ge1$, we let $[n]=\{1,2,\dots,n\}$.
	\begin{Lemma}\label{Lem:GraphsDao}
		For all $t\in[n]$ and all $k\ge0$, we have
		$$
		(I\mm^{k+1}:x_t)\ =\ I\mm^{k}+({\bf x^a}/x_t\ :\ {\bf x^a}\in\mathcal{G}(I),\ {\bf a}[t]>0)\mm^{k+1}.
		$$
	\end{Lemma}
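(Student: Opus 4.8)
The plan is to exploit that both sides are monomial ideals, so that it suffices to compare their monomials. The engine of the whole argument is a single reformulation of membership, which I would establish first: a monomial $w$ lies in $(I\mm^{k+1}:x_t)$ precisely when $x_tw\in I\mm^{k+1}$, and a monomial belongs to $I\mm^{k+1}$ exactly when it is divisible by some ${\bf x^a}\in\mathcal{G}(I)$ with degree excess at least $k+1$. Applying this to $x_tw$, the condition $w\in(I\mm^{k+1}:x_t)$ becomes: there exists ${\bf x^a}\in\mathcal{G}(I)$ with ${\bf x^a}\mid x_tw$ and $\deg(w)-\deg({\bf x^a})\ge k$ (the shift by one coming from the extra factor $x_t$).

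For the inclusion $\supseteq$, both generating families are handled by a one-line multiplication by $x_t\in\mm$. Any monomial of $I\mm^k$ lands in $I\mm^{k+1}$ after multiplying by $x_t$; and for a generator $({\bf x^a}/x_t)v$ with ${\bf x^a}\in\mathcal{G}(I)$, ${\bf a}[t]>0$ and $v\in\mm^{k+1}$, multiplying by $x_t$ gives ${\bf x^a}v\in I\mm^{k+1}$. Hence both summands on the right sit inside the colon ideal.

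The substance is the inclusion $\subseteq$. Given a monomial $w\in(I\mm^{k+1}:x_t)$, the reformulation produces ${\bf x^a}\in\mathcal{G}(I)$ with ${\bf x^a}\mid x_tw$ and $\deg(w)-\deg({\bf x^a})\ge k$, and I would split on whether ${\bf x^a}\mid w$. If ${\bf x^a}\mid w$, then $w/{\bf x^a}$ is a monomial of degree $\ge k$, so $w\in I\mm^k$. Otherwise ${\bf x^a}\nmid w$: comparing exponents, the hypothesis ${\bf x^a}\mid x_tw$ forces ${\bf a}[s]\le\deg_{x_s}(w)$ for every $s\ne t$, so the only variable at which ${\bf x^a}$ can exceed $w$ is $x_t$, whence ${\bf a}[t]=\deg_{x_t}(w)+1\ge1$ and $({\bf x^a}/x_t)\mid w$. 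Setting $u=x_tw/{\bf x^a}$, which is a monomial of degree $\deg(w)+1-\deg({\bf x^a})\ge k+1$, gives $w=({\bf x^a}/x_t)\,u\in({\bf x^a}/x_t)\mm^{k+1}$, placing $w$ in the second summand.

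The only delicate point, and the step I expect to require the most care, is precisely this exponent comparison in the case ${\bf x^a}\nmid w$: one must argue that $x_t$ is the unique variable at which ${\bf x^a}$ can exceed $w$, so that cancelling a single $x_t$ simultaneously repairs divisibility and keeps the degree bookkeeping $\deg(u)\ge k+1$ intact (guaranteeing $u\in\mm^{k+1}$). Everything else is routine; assembling the two cases with the $\supseteq$ direction yields the claimed equality.
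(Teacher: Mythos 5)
Your proof is correct. It differs from the paper's in its mechanics, though both are elementary exponent-counting arguments. The paper works at the level of generators: since $\{{\bf x^a}u : {\bf x^a}\in\mathcal{G}(I),\ u\in\mathcal{G}(\mm^{k+1})\}$ generates $I\mm^{k+1}$, the colon $(I\mm^{k+1}:x_t)$ is generated by the elements ${\bf x^a}u:x_t$, and a three-way case split (whether $x_t$ divides ${\bf x^a}$, divides $u$, or divides neither) sorts these generators directly into $({\bf x^a}/x_t : {\bf a}[t]>0)\mm^{k+1}$, $I\mm^{k}$, and $I\mm^{k+1}$, the last being absorbed into $I\mm^{k}$. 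You instead argue element-by-element: you characterize membership of a monomial in $I\mm^{k+1}$ by divisibility by some ${\bf x^a}\in\mathcal{G}(I)$ together with degree excess at least $k+1$, apply this to $x_tw$, and split on whether ${\bf x^a}$ divides $w$ itself. Your case split (${\bf x^a}\mid w$ versus ${\bf x^a}\nmid w$) is the dual of the paper's (which variable absorbs the cancellation of $x_t$), and the exponent comparison you flag as delicate --- that $x_t$ is the only variable where ${\bf x^a}$ can exceed $w$, forcing ${\bf a}[t]=\deg_{x_t}(w)+1$ --- is exactly right and is what replaces the paper's appeal to the standard fact that a colon of a monomial ideal by a monomial is generated by the colons of its generators. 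What the paper's route buys is brevity: the generator-level fact packages the double inclusion for free. What your route buys is self-containedness: you never need that fact, only the characterization of monomial membership, and your argument makes visible why each monomial of the colon lands in one of the two summands.
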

	\begin{proof}
		Notice that $\{{\bf x^a}u\ :\ {\bf x^a}\in\mathcal{G}(I),\ u\in\mathcal{G}(\mm^{k+1})\}$ is a generating set of $I\mm^{k+1}$. Hence $\{{\bf x^a}u:x_t\ ,\ {\bf x^a}\in\mathcal{G}(I),\ u\in\mathcal{G}(\mm^{k+1})\}$ is a generating set of $(I\mm^{k+1}:x_t)$. Let ${\bf x^a}\in\mathcal{G}(I)$ and $u\in\mathcal{G}(\mm^{k+1})$. Then,
		$$
		{\bf x^a}u:x_t\ =\ \begin{cases}
			\ ({\bf x^a}/x_t)u&\text{if}\ {\bf a}[t]>0,\\
			\ {\bf x^a}(u/x_t)&\text{if}\ {\bf a}[t]=0,\ x_t\ \textup{divides}\ u,\\
			\ {\bf x^a}u&\textup{if}\ {\bf a}[t]=0,\  x_t\ \textup{does not divide}\ u.
		\end{cases}
		$$
		Since $u\in\mathcal{G}(\mm^{k+1})$ is arbitrary, from the above computations we see that
		$$
		(I\mm^{k+1}:x_t)\ =\ I\mm^{k}\ +\ ({\bf x^a}/x_t\ :\ {\bf x^a}\in\mathcal{G}(I),\ {\bf a}[t]>0)\mm^{k+1}\ +\ I\mm^{k+1}.
		$$
		Finally, from the inclusion $I\mm^{k+1}\subseteq I\mm^{k}$, the asserted formula follows.
	\end{proof}
	
	We are now ready for the proof of the main theorem.
	\begin{proof}[Proof of Theorem \ref{Thm:GraphsDao}]
		By Remark \ref{Rem:KnotImpDao} we have $\mathfrak{d}_2(I)\le\mathfrak{d}_1(I)=\mathfrak{d}_3(I)$ for any monomial ideal $I\subset S$. Suppose that for some $i\in[n]$ the variable $x_i$ does not divide any monomial generator of $I$, then by Lemma \ref{Lem:GraphsDao} we have that $(I\mm^{k+1}:x_i)=I\mm^{k}$ for all $k$. Then, for all $k\ge0$,
		$$
		I\mm^{k}\ \subseteq\ (I\mm^{k+1}:\mm)\ =\ \bigcap_{t=1}^n(I\mm^{k+1}:x_t)\ \subseteq\ (I\mm^{k+1}:x_i)\ =\ I\mm^{k}.
		$$
		Equality follows, and the Dao numbers of $I$ are zero in this case.
		
		Suppose now that for all $i\in[n]$ the variable $x_i$ divides some monomial generator ${\bf x^a}\in\mathcal{G}(I)$. By Lemma \ref{Lem:GraphsDao}, for all $k\ge0$ we have
		\begin{align*}
			(I\mm^{k+1}:\mm)\ &=\ \bigcap_{t=1}^n\,(I\mm^{k+1}:x_t)\\
			&=\ \bigcap_{t=1}^n\big[I\mm^{k}+({\bf x^a}/x_t\ :\ {\bf x^a}\in\mathcal{G}(I),\ {\bf a}[t]>0)\mm^{k+1}\big].
		\end{align*}
		
		Now, let $k\ge|{\bf deg}(I)|-n$. It is clear that $I\mm^{k}$ is contained in $(I\mm^{k+1}:\mm)$. Suppose for a contradiction that there exists a monomial $v\in(I\mm^{k+1}:\mm)$ which does not belong to $I\mm^{k}$. Then, by the above formula, we see that
		\begin{equation}\label{eq:vIntersectDao}
			v\ \in\ \bigcap_{t=1}^n\big[({\bf x^a}/x_t\ :\ {\bf x^a}\in\mathcal{G}(I),\ {\bf a}[t]>0)\mm^{k+1}\big].
		\end{equation}
		
		Let $p\in[n]$ such that $x_p$ divides $v$. Since $J=({\bf x^a}/x_p:{\bf x^a}\in\mathcal{G}(I),{\bf a}[p]>0)\mm^{k+1}$ is non-zero by our assumption and $v\in J$, there exist ${\bf x^a}\in\mathcal{G}(I)$ with ${\bf a}[p]>0$ and a monomial $u\in\mathcal{G}(\mm^{k+1})$ such that $({\bf x^a}/x_p)u$ divides $v$. From this we see that
		$$
		\deg_{x_p}(v)\ge\deg_{x_p}(({\bf x^a}/x_p)u)\ge\deg_{x_p}({\bf x^a}/x_p)=\deg_{x_p}({\bf x^a})-1.
		$$
		
		We claim that $\deg_{x_p}(v)=\deg_{x_p}({\bf x^a})-1$. Suppose that $\deg_{x_p}(v)>\deg_{x_p}({\bf x^a})-1$. Then $x_p^{{\bf a}[p]}$ divides $v$. Now, if $x_p$ divides $u$, then $({\bf x^a}/x_p)(x_p(u/x_p))={\bf x^a}(u/x_p)\in I\mm^{k}$ divides $v$, which would imply that $v\in I\mm^{k}$, against our assumption. Consequently $x_p$ does not divide $u$. Then $x_p({\bf x^a}/x_p)u={\bf x^a}u\in I\mm^{k+1}\subset I\mm^{k}$ divides $v$ which is again a contradiction. Hence, we see that $\deg_{x_p}(v)=\deg_{x_p}({\bf x^a})-1\le\deg_{x_p}(I)-1$. Since $p\in[n]$ is arbitrary, we obtain that
		$$
		\deg(v)\ =\ \sum_{t\in[n]}\deg_{x_t}(v)\ \le\ \sum_{t\in[n]}(\deg_{x_t}(I)-1)\ =\ |{\bf deg}(I)|-n.
		$$
		
		However, by equation (\ref{eq:vIntersectDao}), $v$ must be of degree at least $k+1\ge|{\bf deg}(I)|-n+1$, which is a contradiction. Hence $(I\mm^{k+1}:\mm)=I\mm^{k}$ for all $k\ge|{\bf deg}(I)|-n$, and so $\mathfrak{d}_3(I)\le|{\bf deg}(I)|-n$. Finally, if $I$ is squarefree, then $|{\bf deg}(I)|\le n$. Hence $\max\{|{\bf deg}(I)|-n,0\}=0$ and so $\mathfrak{d}_1(I)=\mathfrak{d}_2(I)=\mathfrak{d}_3(I)=0$.
	\end{proof}
	
	\begin{Examples}
		\rm (a) Let $I=(x^a,y^a)\subset S=K[x,y]$ with $a\ge1$. Then $\mathfrak{d}_i(I)=a-1$ for all $i$ \cite[Example 4.5]{Dao21}. The bound $(\sum_{u\in\mathcal{G}(I)}\deg(u))+1-\mu(I)-\omega(I)=a-1$ given in Corollary \ref{Cor:d1d2d3Dao} is optimal in this case. The bound $|{\bf deg}(I)|-n=2a-2$ provided in Theorem \ref{Thm:GraphsDao} is, however, far from being optimal when $a>1$.
		
		(b) Let $I=(x_1x_2x_3,x_1x_2x_4,x_1x_2x_5,x_1x_3x_4,x_1x_3x_5,x_1x_4x_5)\subset S=K[x_1,\dots,x_5]$. Since $I$ is squarefree, $\mathfrak{d}_i(I)=0$ for $i=1,2,3$. The bound $|{\bf deg}(I)|-n=0$ given in Theorem \ref{Thm:GraphsDao} is optimal, while the bound $(\sum_{u\in\mathcal{G}(I)}\deg(u))+1-\mu(I)-\omega(I)=10$ given in Corollary \ref{Cor:d1d2d3Dao} is not.
		
		(c) For any $d\ge1$, $\mm^d$ is componentwise linear and so $\mathfrak{d}_i(\mm^d)=0$ for all $i$. Notice that $|\mathcal{G}(\mm^d)|=\binom{n+d-1}{d}$ and ${\bf deg}(\mm^d)=(d,d,\dots,d)$. Thus, the bound given in Corollary \ref{Cor:d1d2d3Dao} is $(d-1)\binom{n+d-1}{d}+1$, and the one provided in Theorem \ref{Thm:GraphsDao} is $|{\bf deg}(\mm^d)|-n=(d-1)n$. If $d>1$, then both bounds are not optimal.
	\end{Examples}

    We have the next nice consequence.

	\begin{Corollary}
		Let $I\subset S$ be a monomial ideal. For all $k\ge\max\{|{\bf deg}(I)|-n,0\}$,
		$$
		\textup{Soc}(S/(I\mm^{k+1}))\ =\ \frac{(I\mm^{k+1}:\mm)}{I\mm^{k+1}}\ =\ \frac{I\mm^{k}}{I\mm^{k+1}}.
		$$
		In particular, $\beta_{n-1}(I\mm^{k+1})\ =\ \mu(I\mm^{k})$.
	\end{Corollary}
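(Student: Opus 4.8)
The plan is to read off the two displayed equalities directly from results already at hand, and then extract the Betti-number statement from the top Koszul homology. For the first equality I would simply invoke the definition of the socle: for any homogeneous ideal $J\subset S$ one has $\textup{Soc}(S/J)=(0:_{S/J}\mm)=(J:\mm)/J$, and applying this to $J=I\mm^{k+1}$ gives $\textup{Soc}(S/(I\mm^{k+1}))=(I\mm^{k+1}:\mm)/(I\mm^{k+1})$. The second equality is exactly the content of Theorem \ref{Thm:GraphsDao}: since $k\ge\max\{|{\bf deg}(I)|-n,0\}\ge\mathfrak{d}_3(I)$, the ideal $I\mm^k$ is weakly $\mm$-full, which by definition means $(I\mm^{k+1}:\mm)=((I\mm^k)\mm:\mm)=I\mm^k$. (Indeed, the proof of Theorem \ref{Thm:GraphsDao} establishes $(I\mm^{k+1}:\mm)=I\mm^k$ in precisely this range.)

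For the ``in particular'' claim I would pass from the ideal $I\mm^{k+1}$ to the cyclic module $S/(I\mm^{k+1})$. The short exact sequence $0\to I\mm^{k+1}\to S\to S/(I\mm^{k+1})\to0$ and the long exact sequence of $\Tor^S_\bullet(-,K)$, together with $\Tor^S_i(S,K)=0$ for $i>0$ and the fact that $I\mm^{k+1}\subseteq\mm$ (so that the induced map $\Tor^S_0(I\mm^{k+1},K)\to\Tor^S_0(S,K)$ vanishes), yield the degree shift $\Tor^S_{i}(S/(I\mm^{k+1}),K)\cong\Tor^S_{i-1}(I\mm^{k+1},K)$ for all $i\ge1$; in particular $\beta_{n-1}(I\mm^{k+1})=\beta_{n}(S/(I\mm^{k+1}))$. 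It then remains to compute the top Betti number of the cyclic module. Computing $\Tor$ via the Koszul complex on ${\bf x}:x_1,\dots,x_n$ gives $\beta_n(S/J)=\dim_K H_n({\bf x};S/J)$, and fact (iv) identifies $H_n({\bf x};S/J)\cong e_1\wedge\cdots\wedge e_n\,\textup{Soc}(S/J)$, so that $\beta_n(S/J)=\dim_K\textup{Soc}(S/J)$; the wedge factor only shifts the internal degree and does not affect the $K$-dimension.

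Putting these together, for $k$ in the asserted range,
\[
\beta_{n-1}(I\mm^{k+1})=\dim_K\textup{Soc}(S/(I\mm^{k+1}))=\dim_K\frac{I\mm^{k}}{I\mm^{k+1}}=\mu(I\mm^{k}),
\]
where the middle equality is the first part of the corollary and the last equality is the standard identity $\mu(M)=\dim_K(M/\mm M)$ applied to $M=I\mm^{k}$, using $\mm\cdot I\mm^{k}=I\mm^{k+1}$. There is no deep obstacle here: the statement is an assembly of Theorem \ref{Thm:GraphsDao} and fact (iv). The only point demanding a little care is the identification of the top Koszul homology with the socle and the harmless internal degree shift introduced by the generator $e_1\wedge\cdots\wedge e_n$; the degenerate case $n=1$ is covered by the same argument, since the vanishing of the connecting map makes the shift valid already for $i=1$, and there both sides equal $1$.
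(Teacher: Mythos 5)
Your proof is correct and follows exactly the route the paper intends: the paper states this corollary without proof as an immediate consequence of Theorem \ref{Thm:GraphsDao} (which gives $(I\mm^{k+1}:\mm)=I\mm^{k}$ in the stated range) together with fact (iv) identifying $H_n({\bf x};S/J)$ with the socle, and your writeup assembles precisely these ingredients, including the standard dimension shift $\beta_{n-1}(I\mm^{k+1})=\beta_n(S/(I\mm^{k+1}))$ and Nakayama via $\mm\cdot I\mm^k=I\mm^{k+1}$. No gaps.
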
\medskip
	
	Let $u={\bf x^a}\in S$ be a monomial. Its \textit{support} is the set defined as $$\supp(u)\ =\ \{i\ :\ x_i\ \textup{divides}\ u\}\ =\ \{i\ :\ {\bf a}[i]>0\}.$$
	
	Next, we compute the Dao numbers of monomial complete intersections.
	
	We call an ideal $I\subset S$ a \textit{complete intersection} if $I$ is generated by a regular sequence. In particular, it is easy to see that a monomial ideal $I\subset S$ with minimal monomial generating set $\mathcal{G}(I)=\{u_1,\dots,u_m\}$ is a complete intersection if and only if $\supp(u_i)\cap\supp(u_j)\ =\ \emptyset$ for all $1\le i<j\le n$.
	\begin{Corollary}\label{Cor:CIDao}
		Let $I\subset S$ be a monomial complete intersection with minimal monomial generating set $\mathcal{G}(I)=\{u_1,\dots,u_m\}$.
		\begin{enumerate}
			\item[\textup{(a)}] If $\bigcup_{i=1}^m\supp(u_i)\ne[n]$, then $\mathfrak{d}_1(I)=\mathfrak{d}_2(I)=\mathfrak{d}_3(I)=0.$\medskip
			\item[\textup{(b)}] If $\bigcup_{i=1}^m\supp(u_i)=[n]$ and $\mathcal{G}(I)$ contains only pure powers, then $n=m$, up to relabeling $u_i=x_i^{a_i}$ for all $i\in[n]$ with $1\le a_1\le\dots\le a_n$ and $$\mathfrak{d}_1(I)=\mathfrak{d}_3(I)=\sum_{i=1}^{n-1}a_i-(n-1).$$
			\item[\textup{(c)}] If $\bigcup_{i=1}^m\supp(u_i)=[n]$, and $\mathcal{G}(I)$ contains a non-pure power, then $$\mathfrak{d}_1(I)=\mathfrak{d}_2(I)=\mathfrak{d}_3(I)=0.$$
		\end{enumerate}
	\end{Corollary}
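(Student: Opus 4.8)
The plan is to reduce all three parts to a direct monomial analysis of when $I\mm^k$ fails to be weakly $\mm$-full, exploiting the complete intersection hypothesis through Lemma~\ref{Lem:GraphsDao}. Since $u_1,\dots,u_m$ have pairwise disjoint supports, each variable $x_t$ divides at most one of them; writing $u_{(t)}$ for that unique generator when it exists, Lemma~\ref{Lem:GraphsDao} gives $(I\mm^{k+1}:x_t)=I\mm^k+(u_{(t)}/x_t)\mm^{k+1}$, and $(I\mm^{k+1}:x_t)=I\mm^k$ when no generator involves $x_t$. Intersecting over $t$ and reading off monomials, a monomial ${\bf x^b}$ lies in $(I\mm^{k+1}:\mm)\setminus I\mm^k$ if and only if ${\bf x^b}\notin I\mm^k$ while, for every $t$ in $\bigcup_i\supp(u_i)$, we have $(u_{(t)}/x_t)\mid{\bf x^b}$ and $|{\bf b}|\ge\deg(u_{(t)})+k$. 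By Remark~\ref{Rem:KnotImpDao} we always have $\mathfrak{d}_2(I)\le\mathfrak{d}_1(I)=\mathfrak{d}_3(I)$, so it suffices to locate the values of $k$ admitting such a witness monomial, which pins down $\mathfrak{d}_3(I)$.

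Parts (a) and (c) are then short. For (a), if some $x_i$ divides no generator, the opening lines of the proof of Theorem~\ref{Thm:GraphsDao} already yield $(I\mm^{k+1}:x_i)=I\mm^k$ for all $k$, forcing $(I\mm^{k+1}:\mm)=I\mm^k$, so all Dao numbers vanish. For (c), suppose some generator $u$ is a non-pure power, so $|\supp(u)|\ge2$. For any candidate witness ${\bf x^b}\notin I\mm^k$ and any two distinct $t,t'\in\supp(u)$, the divisibilities $(u/x_{t'})\mid{\bf x^b}$ and $(u/x_t)\mid{\bf x^b}$ force ${\bf b}[t]\ge\deg_{x_t}(u)$ and ${\bf b}[t']\ge\deg_{x_{t'}}(u)$; letting $t$ range over $\supp(u)$ yields $u\mid{\bf x^b}$, and combined with $|{\bf b}|\ge\deg(u)+k$ this places ${\bf x^b}\in(u)\mm^k\subseteq I\mm^k$, a contradiction. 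Hence no witness exists for any $k$, so $\mathfrak{d}_3(I)=0$ and $\mathfrak{d}_1(I)=\mathfrak{d}_2(I)=\mathfrak{d}_3(I)=0$ by Remark~\ref{Rem:KnotImpDao}.

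The substance is part (b). First the combinatorics: disjoint supports, pure powers, and $\bigcup_i\supp(u_i)=[n]$ force each variable into exactly one generator, so $m=n$, and after relabeling and reordering $u_i=x_i^{a_i}$ with $1\le a_1\le\cdots\le a_n$. Now $u_{(t)}/x_t=x_t^{a_t-1}$, so a witness ${\bf x^b}\in(I\mm^{k+1}:\mm)\setminus I\mm^k$ must satisfy ${\bf b}[t]\ge a_t-1$ and $|{\bf b}|\ge a_t+k$ for every $t$. I would then pin the coordinates exactly: fixing $t$, the inequality $|{\bf b}|\ge a_t+k$ together with ${\bf x^b}\notin I\mm^k$ (which forbids ${\bf b}[t]\ge a_t$ once $|{\bf b}|\ge a_t+k$) forces ${\bf b}[t]<a_t$, hence ${\bf b}[t]=a_t-1$. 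Therefore the only possible witness is the socle monomial $w=x_1^{a_1-1}\cdots x_n^{a_n-1}$ of $S/I$, of degree $\sum_{i=1}^n a_i-n$, and $w$ genuinely lies in $(I\mm^{k+1}:\mm)\setminus I\mm^k$ exactly when $\sum_{i=1}^n a_i-n\ge a_n+k$, i.e. $k\le\sum_{i=1}^{n-1}a_i-n$. Thus $I\mm^k$ fails to be weakly $\mm$-full precisely for $k\le\sum_{i=1}^{n-1}a_i-n$ and is weakly $\mm$-full for all larger $k$, giving $\mathfrak{d}_3(I)=\sum_{i=1}^{n-1}a_i-(n-1)$, and $\mathfrak{d}_1(I)=\mathfrak{d}_3(I)$.

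The main obstacle is exactly this exact evaluation in (b). The upper bound $\mathfrak{d}_3(I)\le\sum_{i=1}^{n-1}a_i-(n-1)$ already follows from Theorem~\ref{Thm:FHM-Dao} together with the regularity equality $\reg_{\gr_\mm(S)}\gr_\mm(I)=\sum_{i=1}^{n-1}a_i-(n-1)$ for monomial complete intersections (\cite[Theorem 4.1(b)]{ACF3}; cf. Corollary~\ref{Cor:d1d2d3Dao}), but the matching lower bound requires producing the explicit witness and, crucially, verifying it is the \emph{only} candidate, so that the threshold is identified exactly rather than merely bounded. As a consistency check, for $I=(x^a,y^a)\subset K[x,y]$ the formula returns $\mathfrak{d}_3(I)=a-1$, agreeing with \cite[Example 4.5]{Dao21}. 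Finally, (b) deliberately asserts nothing about $\mathfrak{d}_2(I)$ beyond $\mathfrak{d}_2(I)\le\mathfrak{d}_3(I)$; determining $\mathfrak{d}_2$ for a general complete intersection is genuinely subtler, which is the very phenomenon this corollary is meant to expose.
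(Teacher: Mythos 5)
Your proposal is correct and rests on the same pillars as the paper's proof: Lemma \ref{Lem:GraphsDao}, the reduction $\mathfrak{d}_2(I)\le\mathfrak{d}_1(I)=\mathfrak{d}_3(I)$ from Remark \ref{Rem:KnotImpDao}, and the witness monomial $x_1^{a_1-1}\cdots x_n^{a_n-1}$. Parts (a) and (c) are the paper's arguments up to repackaging (for (c) the paper fixes two variables $x_1,x_2$ dividing the non-pure generator $u_1$ and reaches the contradiction $v=u_1(w_1/x_1)\in I\mm^k$; you let $t$ range over $\supp(u)$ to conclude $u\mid{\bf x^b}$ --- the same computation). The genuine difference is the logical structure of (b): the paper imports the upper bound $\mathfrak{d}_3(I)\le\sum_{i=1}^{n-1}a_i-(n-1)$ from Corollary \ref{Cor:d1d2d3Dao} (hence from the linear-quotients machinery of Theorem \ref{Thm:LinQuotBoundDao}) and then only needs the witness at the single value $k=\sum_{i=1}^{n-1}a_i-n$ to force equality; you instead classify all possible monomial witnesses, showing that the constraints ${\bf b}[t]\ge a_t-1$, $|{\bf b}|\ge a_t+k$, and ${\bf x^b}\notin I\mm^k$ pin down ${\bf b}[t]=a_t-1$ for every $t$, so the socle monomial is the unique candidate, and then you read off exactly for which $k$ it works. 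This makes (b) self-contained (no appeal to Corollary \ref{Cor:d1d2d3Dao} or Theorem \ref{Thm:FHM-Dao}) and yields the slightly stronger conclusion that $I\mm^k$ fails to be weakly $\mm$-full for \emph{every} $k\le\sum_{i=1}^{n-1}a_i-n$, at the price of the extra uniqueness argument. One cosmetic caveat: your opening ``if and only if'' characterization of witnesses quantifies only over $t\in\bigcup_i\supp(u_i)$, so as stated its backward implication fails when some variable divides no generator (in that case no witness exists at all); since you treat that situation separately in (a) and invoke the equivalence only under the full-support hypothesis of (b) and (c), nothing in the proof is affected.
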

	\begin{proof}
		If $\bigcup_{i=1}^m\supp(u_i)\ne[n]$, then the Dao numbers are zero as shown in the first part of the proof of Theorem \ref{Thm:GraphsDao}. Statement (a) follows.
		
		Now, assume that $\bigcup_{i=1}^m\supp(u_i)=[n]$, and $\mathcal{G}(I)$ contains only pure powers. Thus $m=n$ and up to relabeling we can assume that $u_i=x_i^{a_i}$ for all $i\in[n]$ with $1\le a_1\le\dots\le a_n$. By Corollary \ref{Cor:d1d2d3Dao}, we have
		$$
		\mathfrak{d}_2(I)\le\mathfrak{d}_3(I)=\mathfrak{d}_1(I)\le(\!\!\,\!\!\sum_{u\in\mathcal{G}(I)}\deg(u))+1-\mu(I)-\omega(I)=\sum_{i=1}^{n-1}a_i-(n-1).
		$$
		
		Thus, it suffices to show that $I\m^k$ is not weakly $\mm$-full for $k=\sum_{i=1}^{n-1}a_i-n$. For this aim, consider the monomial $v=\prod_{i\in[n]}x_i^{a_i-1}$. Let $j\in[n]$. Notice that
		$$
		x_jv=u_j(\prod_{i\in[n]\setminus\{j\}}x_i^{a_i-1})\ \in\ I\mm^{\sum_{i\in[n]\setminus\{j\}}a_i-(n-1)}.
		$$
		Now, since for all $j\in[n]$ we have
		$$
		\sum_{i\in[n]\setminus\{j\}}\!\!\!a_i-(n-1)\ \ge\ \sum_{i=1}^{n-1}a_i-n+1\ =\ k+1
		$$
		we see that $x_jv\in I\mm^{k+1}$ for all $j\in[n]$. Hence $v\in(I\mm^{k+1}:\mm)$. On the other hand $v\notin I\mm^k$, because for each monomial $w\in I\mm^k\subset I$ there exists $i\in[n]$ such that $\deg_{x_{i}}(w)\ge a_{i}$. This shows that $\mathfrak{d}_1(I)=\mathfrak{d}_3(I)=\sum_{i=1}^{n-1}a_i-(n-1)$ and (b) follows.\smallskip
		
		Finally, assume that $\bigcup_{i=1}^m\supp(u_i)=[n]$ and that $\mathcal{G}(I)$ contains a non-pure power. Up to relabeling, we may assume that $x_1x_2$ divides $u_1\in\mathcal{G}(I)$. Since $I$ is a complete intersection, $x_1$ and $x_2$ do not divide any other generator $u\in\mathcal{G}(I)\setminus\{u_1\}$. Hence, by Lemma \ref{Lem:GraphsDao}, for all $k\ge0$, we have
		\begin{align*}
			(I\mm^{k+1}:x_1)\ &=\ I\mm^{k}+(u_1/x_1)\mm^{k+1},\\
			(I\mm^{k+1}:x_2)\ &=\ I\mm^{k}+(u_1/x_2)\mm^{k+1}.
		\end{align*}
		Assume for a contradiction that $(I\mm^{k+1}:\mm)\ne I\mm^k$ and let $v\in (I\mm^{k+1}:\mm)$ be a monomial not belonging to $I\mm^k$. Arguing as in the proof of Theorem \ref{Thm:GraphsDao}, we see that $v\in(u_1/x_1)\mm^{k+1}\cap(u_1/x_2)\mm^{k+1}$. Thus $v=(u_1/x_1)w_1=(u_1/x_2)w_2$ where $w_1,w_2\in\mm^{k+1}$ are monomials. The previous equation implies that $\deg_{x_1}(v)\ge a_1$. Thus $x_1$ divides $w_1$ and so $v=(u_1/x_1)(x_1(w/x_1))=u_1(w/x_1)\in I\mm^{k}$, a contradiction. Hence, $(I\mm^{k+1}:\mm)=I\mm^k$ for all $k\ge0$, and so $\mathfrak{d}_1(I)=\mathfrak{d}_2(I)=\mathfrak{d}_3(I)=0$.
	\end{proof}\medskip
	
	\textit{Acknowledgment}. The author was partly supported by the Grant JDC2023-051705-I funded by
	MICIU/AEI/10.13039/501100011033 and by the FSE+. The author would like to thank the referee for several useful suggestions, and Cleto B. Miranda-Neto and Douglas Queiroz for their careful reading of the manuscript and several valuable comments.

\end{document}